\documentclass[11pt]{article}
\usepackage{amsmath,amssymb,amsthm,mathtools}
\usepackage{enumitem}
\usepackage[margin=1in]{geometry}
\usepackage[hidelinks]{hyperref}

\newtheorem{theorem}{Theorem}[section]
\newtheorem{lemma}[theorem]{Lemma}
\newtheorem{corollary}[theorem]{Corollary}
\newtheorem{conjecture}[theorem]{Conjecture}
\newtheorem{observation}[theorem]{Observation}
\theoremstyle{remark}
\newtheorem{remark}[theorem]{Remark}
\theoremstyle{plain}
\numberwithin{equation}{section}

\newcommand{\ex}{\operatorname{ex}}
\newcommand{\cM}{\mathcal{M}}

\begin{document}
\title{On the Exact Tur\'an Number of $F^-_{4,3}$}
\author{ Chunqiu Fang \thanks{School of Computer Science and Technology, Dongguan University of Technology, Dongguan, Guangdong, 523808, China. Email: \texttt{fcq15@tsinghua.org.cn}. Supported by the National Natural Science Foundation for Young Scientists of China (Grant No.~12301435).}}
\date{}
\maketitle
\begin{abstract}
For a $3$-graph $F$, the Tur\'an number of $F$, denoted by $\ex(n,F)$, is the maximum number of edges in a $3$-graph on $n$ vertices containing no subgraph isomorphic to $F$.
Let $F^-_{4,3}$ be the $3$-graph formed by a complete four-vertex core and three outer vertices, with all but one of the twelve triples containing one core vertex and two outer vertices. We prove that, for every $n\ge8$,
\[
\ex(n,F^-_{4,3})=\binom n3-\binom{\lfloor n/2\rfloor}{3}-\binom{\lceil n/2\rceil}{3},
\]
and the balanced complete bipartite $3$-graph is the unique extremal configuration. This determines the exact value and all equality cases in the asymptotic theorem of Mubayi and R\"odl. It also extends the exact Tur\'an Number of $F_{3,3}$ and resolves a conjecture of Frankl, Huang and R\"odl.
\end{abstract}

\medskip\noindent\textbf{Keywords:} hypergraph Tur\'an number; $3$-uniform hypergraph; extremal configuration.

\section{Introduction}\label{sec:introduction}

An $r$-graph is a family of $r$-element subsets of a finite vertex set. For an $r$-graph $H$, write $V(H)$ for its vertex set, $E(H)$ for its edge set and $e(H)=|E(H)|$. For $X\subseteq V(H)$, let $H[X]$ be the subgraph induced by $X$ and $H-X=H[V(H)\setminus X]$. Write $K_s^{(r)}$ for the complete $r$-graph on $s$ vertices and call $X\subseteq V(H)$ complete if $H[X]\cong K_{|X|}^{(r)}$. A copy of an $r$-graph $F$ in $H$ is a subgraph of $H$ isomorphic to $F$. For a family $\mathcal F$ of $r$-graphs, we say that $H$ is $\mathcal F$-free if it contains no copy of any member of $\mathcal F$ and define the Tur\'an number of $\mathcal F$ by
\[
 \ex(n,\mathcal F)=\max\{e(H):H\text{ is an $\mathcal F$-free $r$-graph on $n$ vertices}\}.
\]
When $\mathcal F=\{F\}$, we write $\ex(n,F)$. A standard averaging argument shows that $\ex(n,\mathcal F)/\binom nr$ is nonincreasing for $n\ge r$. Consequently, the following limit exists:
\[
 \pi(\mathcal F)=\lim_{n\to\infty}\frac{\ex(n,\mathcal F)}{\binom nr}.
\]
It is called the Tur\'an density of $\mathcal F$; for a singleton family, we write $\pi(F)$.

Tur\'an's classical theorem determines both the extremal number and the unique extremal construction when the forbidden graph is complete~\cite{Turan}. For uniform hypergraphs, even the Tur\'an density of the tetrahedron $K_4^{(3)}$ remains unknown. Exact results that also characterize the extremal configurations are comparatively rare; see Keevash~\cite{KeevashSurvey} for a survey.

For positive integers $s$ and $t$, let $F_{s,t}$ be the $3$-graph with vertex partition $V_{1}\cup V_{2}$, where $|V_{1}|=s$ and $|V_{2}|=t$, and with edge set
\[
E(F_{s,t})=\binom{V_{1}}{3}\cup\{\{u,x,y\}:u\in V_{1},\ \{x,y\}\in\tbinom{V_{2}}{2}\}.
\]
We call $V_{1}$ the core and $V_{2}$ the outer set. Thus every edge of $F_{s,t}$ meets the core in either one or three vertices. For $s=4$ and $t=3$, we take $V_{1}=\{1,2,3,4\}$ and $V_{2}=\{5,6,7\}$ and write $F^-_{4,3}=F_{4,3}\setminus\{156\}$, where $156$ abbreviates the triple $\{1,5,6\}$. The isomorphism type does not depend on which such edge is deleted. Thus $F^-_{4,3}$ contains all four core triples and eleven of the twelve triples that meet the core in exactly one vertex.

A $3$-graph is two-colorable if its vertices can be colored with two colors so that no edge is monochromatic. Let $B_n$ denote the balanced complete bipartite $3$-graph obtained by partitioning an $n$-vertex set into two parts whose sizes differ by at most one and taking all triples that meet both parts. Let
\[
b(n)=e(B_n)=\binom n3-\binom{\lfloor n/2\rfloor}{3}-\binom{\lceil n/2\rceil}{3}.
\]
Among all two-colorable $3$-graphs on $n$ vertices, $B_n$ has the maximum number $b(n)$ of edges and $b(n)=(3/4+o(1))\binom n3$.

The most directly comparable exact result concerns $F_{3,3}$. Keevash and Mubayi~\cite{KeevashMubayi} proved that $\ex(n,F_{3,3})=b(n)$ for every $n\ge6$. Goldwasser and Hansen~\cite{GoldwasserHansen} independently obtained the same exact value and classified all extremal configurations, showing in particular that $B_n$ is the unique extremal $3$-graph for $n\ge6$. The vertices $\{2,3,4,5,6,7\}$ contain a copy of $F_{3,3}$ in $F^-_{4,3}$, with core $\{2,3,4\}$. Thus forbidding $F^-_{4,3}$ is a strictly weaker condition than forbidding $F_{3,3}$ and the exact theorem for $F_{3,3}$ does not provide the upper bound required here.

Mubayi and R\"odl~\cite{MubayiRodl} proved that $\pi(F^-_{4,3})=3/4$. Their argument uses the link-multigraph method developed by de Caen and F\"uredi for the Fano plane~\cite{deCaenFuredi}: after fixing a complete four-set, one superimposes the four corresponding link graphs to obtain an integer-weighted graph. Weighted versions of Tur\'an's theorem were studied by Bondy and Tuza~\cite{BondyTuza} and by F\"uredi and K\"undgen~\cite{FurediKundgen}. 

The balanced complete bipartite construction also arises in the Tur\'an problem for the Fano plane. De Caen and F\"uredi~\cite{deCaenFuredi} first determined its Tur\'an density. The exact result for sufficiently large orders was obtained independently by Keevash and Sudakov~\cite{KeevashSudakov} and by F\"uredi and Simonovits~\cite{FurediSimonovits}; Bellmann and Reiher~\cite{BellmannReiher} later established the exact formula for every $n\ge7$. The $3$-graph $B_n$ is the unique extremal configuration for $n\ge8$, whereas at order seven there is one additional extremal configuration. Our main theorem determines the exact extremal number and the unique extremal configuration for $F^-_{4,3}$ from order eight onward.

\begin{theorem}\label{thm:main}
Let $H$ be an $F^-_{4,3}$-free $3$-graph on $n$ vertices, where $n\ge8$. Then $e(H)\le b(n)$, with equality if and only if $H$ is isomorphic to $B_n$.
\end{theorem}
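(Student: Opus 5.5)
We argue by induction on $n$, using the standard vertex-deletion scheme for exact Tur\'an results. The quantity to track is the increment
\[
b(n)-b(n-1)=\binom{n-1}{2}-\binom{\lfloor (n-1)/2\rfloor}{2},
\]
which is the minimum degree of $B_n$. Let $H$ be $F^-_{4,3}$-free on $n$ vertices with $e(H)\ge b(n)$. If some vertex $v$ has $d_H(v)<b(n)-b(n-1)$, then $H-v$ is $F^-_{4,3}$-free and has more than $b(n-1)$ edges. For $n-1\ge 8$ this contradicts the induction hypothesis. So we may assume $\delta(H)\ge b(n)-b(n-1)$. Equality cases are handled the same way: if $e(H)=b(n)$ and some vertex has degree below the threshold, $H-v$ again exceeds $b(n-1)$. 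If all degrees meet the threshold, we run the structural argument below and finish with the uniqueness of $B_{n-1}$.

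The base case $n=8$ is checked by a direct (finite, possibly computer-assisted or careful hand) argument. Its content is that an $F^-_{4,3}$-free $3$-graph on $8$ vertices has at most $b(8)=56-8=48$ edges, with equality only for $B_8$. By hand, I would note that $H$ misses at most $8$ triples. I would then show that $H$ contains a complete $4$-set $K$ such that every vertex pair outside $K$ spans a rich link, and locate the forced copy of $F^-_{4,3}$ by Hall-type counting.

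The main step for $n\ge9$ with large minimum degree uses the Mubayi--R\"odl link-multigraph method. First, $H$ contains a $K_4^{(3)}$: its density exceeds $3/4>\pi(K_4^{(3)})$ lower bounds, but it is safer to argue directly. If $H$ were $K_4^{(3)}$-free we could bound $e(H)$ by a cruder counting, for instance via the bound $\ex(n,K_4^{(3)})\le 0.5617\binom n3$ (Razborov), which is below $b(n)$ for $n$ large. For small $n$ we would instead use the degree condition. Fix a complete $4$-set $K=\{a_1,\dots,a_4\}$. On $W=V\setminus K$, define the multigraph $M$ in which the pair $xy$ gets weight $w(xy)=|\{i: a_ixy\in E(H)\}|\in\{0,\dots,4\}$. $F^-_{4,3}$-freeness says exactly that no triangle $xyz$ in $W$ admits a system of distinct representatives avoiding one slot. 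Concretely, we cannot have an assignment of the $4$ core vertices covering $11$ of the $12$ slots $(a_i,\text{pair})$. This forces every triangle in $W$ to have total weight at most $10$ in a structured way, e.g.\ it excludes weights $(4,4,3)$. A weighted Tur\'an argument in the style of Bondy--Tuza and F\"uredi--K\"undgen then gives $\sum_{xy}w(xy)\le 3\binom{n-4}{2}+O(n)$. Combining this with the degree condition shows that almost every pair has weight $3$, and that the weight-$\le 2$ pairs form a near-bipartite structure. From this we extract a bipartition $V=X\cup Y$ for which $H$ has very few monochromatic edges (stability).

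The final step converts stability into exactness. Choose the bipartition that minimizes the number of monochromatic (``bad'') edges. The minimum degree condition implies each part has size $n/2\pm o(n)$, and that every vertex has only $o(n^2)$ bad edges, since otherwise moving it would help. Now suppose a bad edge $e=xyz\subseteq X$ exists. We build a copy of $F^-_{4,3}$ using $e$ together with a vertex of $Y$ as the core, and three outer vertices chosen in $Y$ or $X$. The near-completeness of crossing triples makes such a copy appear unless $e$ is forced to have many missing crossing edges near it. Each bad edge therefore forces $\Omega(n)$ missing crossing triples, with a charging that gives strictly more missing than bad edges. So $e(H)<b(n)$ unless there are no bad edges, in which case $H\subseteq$ a complete bipartite $3$-graph and $e(H)\le b(n)$, with equality only for $B_n$. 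I expect the main obstacle to be the weighted-triangle analysis: identifying exactly which weight patterns on a triangle of $M$ are forbidden, and making the resulting weighted Tur\'an bound sharp enough (not just asymptotically) for the induction. A secondary difficulty is making every $o(\cdot)$ term effective so that the argument works for all $n\ge9$ rather than only for large $n$. If this fails, I would try the approach of Bellmann--Reiher for the Fano plane, which is a cleverer choice of $K$ maximizing a link weight, to obtain effective bounds.
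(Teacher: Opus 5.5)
\textbf{There is a genuine gap.} Your proposal is a stability template, and the parts it leaves open are exactly the hard parts.

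First, the stability step as you describe it is wrong. In $B_n$, every complete four-set $K$ has two vertices in each part. For an outer pair $xy$, the weight $w(xy)$ is $2$ when $x,y$ lie on the same side and $4$ when they lie on opposite sides, so no pair has weight $3$. Thus ``almost every pair has weight $3$'' describes the wrong structure. Likewise, the weight-$2$ pairs form two disjoint cliques, not a near-bipartite graph; it is their complement that is complete bipartite. Worse, the uniform weighting $w\equiv3$ has triangle weights $9\le10$ and total $3\binom m2$ with $m=n-4$. This is only $\lfloor m/2\rfloor$ below the sharp bound $\gamma(m)=3\binom m2+\lfloor m/2\rfloor$. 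So the weighted Tur\'an bound for a four-set, even combined with a minimum-degree condition, cannot by itself single out a near-bipartite structure. You would need a genuine stability theorem for $F^-_{4,3}$, which you do not supply.

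Second, even granting stability, your cleaning step is inherently asymptotic: parts of size $n/2\pm o(n)$, $o(n^2)$ bad edges per vertex, $\Omega(n)$ charging. Yet your vertex-deletion induction has its only base at $n=8$, so the main step must hold for every $n\ge9$. Making all error terms effective down to $n=9$ is not a secondary difficulty. Without it you prove at best the result for $n\ge n_0$ with $n_0$ unspecified, and the orders $8\le n<n_0$ stay open. This includes $n=8$ itself, which you only assert can be ``checked''.

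A minor point: Razborov's bound is asymptotic and fails for small $n$. The averaging argument over four-sets already gives a complete four-set whenever $e(H)>\frac34\binom n3$, for every $n\ge4$.

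\textbf{The missing idea is to make deletion exact, so that no stability is needed.} The paper deletes a complete four-set $R$ and inducts in steps of four. Your triangle condition (every outer triple has total weight at most $10$) yields, by a short induction that removes a weight-$4$ pair, the sharp bound $|\mathcal E_1(R)|\le\gamma(n-4)$. Combine this with $|\mathcal E_2(R)|\le6(n-4)$ and the identity $b(n)-b(n-4)=\gamma(n-4)+5(n-4)+4$. Then any excess over $b(n)$ forces an outer vertex joined to all six pairs of $R$, that is, a $K_5^{(3)}$.

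Local link lemmas then show that a $K_5^{(3)}$ forces a six-set spanning at least nineteen edges, and both such six-vertex configurations are excluded. The base orders $8,\dots,11$ are handled by the same local counting together with the seven-vertex classification.

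At equality, every outer vertex misses exactly one pair of each complete four-set. These missing pairs fall into two complementary types, which directly produce the balanced bipartition, again with no asymptotic estimates anywhere.
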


Frankl, Huang and R\"odl~\cite[Conjecture~4.2]{FranklHuangRodl} considered the two $3$-graphs
\[
R_1=F_{4,3}\setminus\{156,257,367\},\qquad R_2=F_{4,3}\setminus\{156,157,367\}.
\]
Both $R_1$ and $R_2$ are subgraphs of $F^-_{4,3}$. Also, one can check that both $R_1$ and $R_2$ are not two-colorable. Thus, $B_{n}$ is $\{R_{1}, R_{2}\}$-free.
 They posed the following conjecture, stated here in our notation.

\begin{conjecture}[Frankl--Huang--R\"odl]\label{conj:FHR}
There is an integer $m_0$ such that, for every integer $m>m_0$,
\[
  \ex(2m,\{R_1,R_2\})=\binom{2m}{3}-2\binom m3.
\]
\end{conjecture}

Theorem~\ref{thm:main} implies the following result for every order at least eight, including the characterization of equality.

\begin{corollary}\label{cor:R}
For every $n\ge8$, $\ex(n,\{R_1,R_2\})=b(n)$. Moreover, up to isomorphism, $B_n$ is the unique extremal $3$-graph. In particular, Conjecture~\ref{conj:FHR} holds with $m_0=3$.
\end{corollary}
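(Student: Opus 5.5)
The corollary should follow from Theorem~\ref{thm:main} by a sandwich argument using two monotonicity facts. Every $\{R_1,R_2\}$-free $3$-graph is $F^-_{4,3}$-free, so the upper bound transfers. The extremal graph $B_n$ avoids $R_1$ and $R_2$, so the lower bound holds.

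For the upper bound: since both $R_1$ and $R_2$ are subgraphs of $F^-_{4,3}$, any copy of $F^-_{4,3}$ in a $3$-graph $H$ contains a copy of $R_1$ (and of $R_2$). Hence if $H$ is $\{R_1,R_2\}$-free, it is $F^-_{4,3}$-free. Theorem~\ref{thm:main} then gives $e(H)\le b(n)$ for $n\ge 8$, so $\ex(n,\{R_1,R_2\})\le \ex(n,F^-_{4,3})= b(n)$.

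For the lower bound I would verify that $R_1$ and $R_2$ are not two-colorable. The paper asserts this, but I would check it briefly.
\begin{itemize}
\item In any two-coloring without monochromatic edges, the core $\{1,2,3,4\}$ carries all four core triples. So it must be split $2$--$2$, or $3$--$1$ with no monochromatic triple.
\item A $3$--$1$ split is impossible, since the three like-colored core vertices span a monochromatic core triple. So the core is split $2$--$2$.
\item Among the outer vertices $5,6,7$, some pair $\{x,y\}$ is monochromatic, say red. Every retained edge $uxy$ with $u$ red is then monochromatic. So for each red pair $\{x,y\}$ of outer vertices, every red core vertex $u$ must have its edge $uxy$ among the deleted ones.
\item For $R_1$, each outer pair has exactly one deleted edge. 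A monochromatic outer pair therefore has at most one core vertex whose edge is missing, yet two core vertices share its color. This is a contradiction.
\item For $R_2$, the pair $\{5,7\}$ has deleted edge $157$ only, and $\{6,7\}$ has $367$ only. The pair $\{5,6\}$ has $156$ only. So again each pair has at most one deleted edge, and the same contradiction applies.
\end{itemize}
Since $B_n$ is two-colorable, every subgraph of $B_n$ is two-colorable. Hence $B_n$ is $\{R_1,R_2\}$-free, and $\ex(n,\{R_1,R_2\})\ge b(n)$.

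For uniqueness: if $H$ is $\{R_1,R_2\}$-free with $e(H)=b(n)$, then $H$ is $F^-_{4,3}$-free with $b(n)$ edges. The equality case of Theorem~\ref{thm:main} then gives $H\cong B_n$. Finally, setting $n=2m$ with $m\ge 4$, that is, $m>3$, gives $b(2m)=\binom{2m}{3}-2\binom m3$. This is Conjecture~\ref{conj:FHR} with $m_0=3$.

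No step is a genuine obstacle; the only content beyond Theorem~\ref{thm:main} is the non-two-colorability check above.
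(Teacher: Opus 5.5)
Your proof is correct and follows the same sandwich argument as the paper. Being $\{R_1,R_2\}$-free implies being $F^-_{4,3}$-free, which gives the upper bound and uniqueness via Theorem~\ref{thm:main}; conversely, $B_n$ is $\{R_1,R_2\}$-free because neither $R_1$ nor $R_2$ is two-colorable. Your explicit non-two-colorability check is also correct: the core must split $2$--$2$, and in both $R_1$ and $R_2$ each outer pair has at most one deleted extension, so this properly supplies the detail the paper leaves as ``one can check.''
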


\begin{proof}
Every $\{R_1,R_2\}$-free $3$-graph is $F^-_{4,3}$-free because both $R_1$ and $R_2$ are subgraphs of $F^-_{4,3}$. Hence Theorem~\ref{thm:main} establishes the upper bound and its equality case. Conversely, $B_n$ contains neither $R_1$ nor $R_2$. Thus $B_n$ attains $b(n)$ edges and the final assertion follows by taking $n=2m$.
\end{proof}

\medskip\noindent\textit{Outline of the proofs.} Following the deletion approach of Keevash and Mubayi~\cite{KeevashMubayi}, we induct on the order in steps of four by deleting a complete four-set. A sharp bound on the combined weight of its four link graphs controls the edges that meet the four-set exactly once. Under the deletion hypothesis, a counterexample to the upper bound would contain a $K_5^{(3)}$, and then a six-vertex subgraph with at least nineteen edges. Local link estimates exclude both possible six-vertex configurations. At equality, every vertex outside a complete four-set has exactly one missing pair in its link on that four-set. These pairs fall into two complementary types, which determine the balanced bipartition of the whole graph.

\medskip\noindent\textit{Organisation.} In Section~\ref{sec:prelim}, we establishes the local estimates and the seven-vertex classification. In Section~\ref{sec:proof}, we proves the deletion lemma, handles the initial orders $8,9,10,11$ and completes the induction for the upper bound and uniqueness.

\section{Preliminaries and local lemmas}\label{sec:prelim}

For a $3$-graph $H$, a set $S\subseteq V(H)$ and $j\in\{1,2,3\}$, let
 $\mathcal E_j^H(S)=\{e\in E(H):|e\cap S|=j\}$.
We omit the superscript when the ambient graph is clear. The number of edges meeting $S$ is denoted by $\psi_H(S)=e(H)-e(H-S)=\sum_{j=1}^3|\mathcal E_j^H(S)|$.
For $v\in V(H)$ and $X\subseteq V(H)\setminus\{v\}$, denote the link graph of $v$ restricted to $X$ by $ L_H(v,X)=\{xy\in\tbinom X2:vxy\in E(H)\}$. We omit the subscript $H$ from $\psi_H$ and $L_H$ when the ambient graph is clear. For a pair $xy$ and a set $S$ disjoint from $\{x,y\}$, its codegree in $S$ is $|\{w\in S:wxy\in E(H)\}|$. For a graph $G$ and a vertex $x\in V(G)$, write $d_G(x)$ for the degree of $x$ in $G$. For $P\subseteq V(G)$, let $\partial_G(P)$ denote the set of edges of $G$ with exactly one endpoint in $P$.

Define
\[
  \gamma(m)=2\binom m2+2\left\lfloor\frac{m^2}{4}\right\rfloor
           =3\binom m2+\left\lfloor\frac m2\right\rfloor.
\]
Direct calculation shows that the following identities hold for integers $n\ge4$, $m\ge3$ and $t\ge1$:
\begin{align}
 b(n)-b(n-4)&=\gamma(n-4)+5(n-4)+4,                 \label{eq:b-inc}\\
 \gamma(m)-\gamma(m-2)&=6m-8,                       \label{eq:g-two}\\
 \gamma(t+1)-\gamma(t)&=
 \begin{cases}
  3t,&t\text{ even},\\
  3t+1,&t\text{ odd}.
 \end{cases}                                        \label{eq:g-one}
\end{align}

For every nonnegative integer $k$, we have the elementary identities
\[
 b(2k)-\frac34\binom{2k}{3}=\frac{k(k-1)}2,
 \qquad
 b(2k+1)-\frac34\binom{2k+1}{3}=\frac{k(2k-1)}4.
\]
In particular, $b(n)>\frac34\binom n3$ for $n\ge4$.

\begin{observation}\label{obs:four-set-incidence}
Every $n$-vertex $3$-graph $H$ with $n\ge4$ and more than $\frac34\binom n3$ edges contains a complete four-set.
\end{observation}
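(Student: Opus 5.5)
We prove the contrapositive by a standard averaging (double-counting) argument over four-element subsets of $V(H)$. Suppose $H$ has no complete four-set. Then every $4$-set $Q\subseteq V(H)$ spans at most $3$ edges of $H$, since a $4$-set spanning all four of its triples would be complete. We will count the pairs $(e,Q)$ with $e\in E(H)$, $Q\in\binom{V(H)}{4}$ and $e\subseteq Q$ in two ways.

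First, each edge $e$ is contained in exactly $n-3$ four-sets, namely those obtained by adding one of the remaining $n-3$ vertices. So the number of such pairs equals $(n-3)\,e(H)$. Second, summing over four-sets and using that each spans at most three edges, the number of pairs is at most $3\binom n4$. Hence
\[
(n-3)\,e(H)\le 3\binom n4=\frac{3(n-3)}{4}\binom n3,
\]
where we used the identity $\binom n4=\frac{n-3}{4}\binom n3$. Since $n\ge4$, we may divide by $n-3>0$ and obtain $e(H)\le\frac34\binom n3$. This contradicts the hypothesis $e(H)>\frac34\binom n3$, so $H$ must contain a complete four-set.

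The argument has no real obstacle. The only points to check are the binomial identity $\binom n4=\frac{n-3}{4}\binom n3$ and the fact that every edge lies in exactly $n-3$ four-sets. Combined with the inequality $b(n)>\frac34\binom n3$ for $n\ge4$ noted above, the observation guarantees that any $H$ with $e(H)\ge b(n)$ contains a complete four-set, which is what the deletion step of the induction requires.
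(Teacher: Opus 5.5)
Your proposal is correct and matches the paper's proof: the same double count of pairs $(e,Q)$ with $e\subseteq Q$, using that each edge lies in $n-3$ four-sets and each four-set spans at most three edges when none is complete, giving $(n-3)e(H)\le 3\binom n4=\frac34(n-3)\binom n3$.
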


\begin{proof}
Suppose that $H$ contains no complete four-set. Count the pairs $(e,R)$ for which $e\in E(H)$ and $R$ is a four-set containing $e$. Each edge occurs in exactly $n-3$ such pairs, whereas every four-set contains at most three edges. Thus $(n-3)e(H)\le3\binom n4=\frac34(n-3)\binom n3$ and hence $e(H)\le\frac34\binom n3$, contrary to the hypothesis.
\end{proof}

If $S$ is a complete four-set in $H$ and $U\subseteq V(H)\setminus S$, define a weight on $\binom U2$ by
\[
  \omega_S(xy)=|\{u\in S:uxy\in E(H)\}|.
\]
Equivalently, $\omega_S(xy)$ is the multiplicity of $xy$ in the multiset union of the four link graphs restricted to $U$.

The next bound is an application of the weighted graph inequality of Bondy and Tuza~\cite{BondyTuza}; see also~\cite{FurediKundgen,KeevashMubayi}. We include a proof for completeness.

\begin{lemma}\label{lem:link}
Let $S$ be a complete four-set in an $F^-_{4,3}$-free $3$-graph $H$ and let $U\subseteq V(H)\setminus S$ have size $m$. Then $\sum_{xy\in\binom U2}\omega_S(xy)\le\gamma(m)$.
\end{lemma}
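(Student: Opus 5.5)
The plan is to reduce Lemma~\ref{lem:link} to a purely weighted-graph statement and then prove that statement by induction on $m$ in steps of two, matching identity~\eqref{eq:g-two}.

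\textbf{Local constraint.} First I record the only structural input. Every weight satisfies $0\le\omega_S(xy)\le4$. For any three distinct vertices $x,y,z\in U$,
\[
\omega_S(xy)+\omega_S(xz)+\omega_S(yz)\le 10 .
\]
To see this, suppose the sum is at least $11$. Of the twelve triples $uab$ with $u\in S$ and $ab\in\binom{\{x,y,z\}}{2}$, at most one is then missing from $H$. Together with the four core triples of the complete set $S$, the vertices $S\cup\{x,y,z\}$ therefore span a copy of $F_{4,3}$ minus at most one triple meeting the core once. Since the isomorphism type of $F^-_{4,3}$ does not depend on which such triple is deleted, $H$ contains $F^-_{4,3}$, a contradiction. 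This is also the bound attained by the extremal weighting: weight $4$ across a balanced bipartition of $U$ and weight $2$ inside the parts, which gives total weight $2\binom m2+2\lfloor m^2/4\rfloor=\gamma(m)$.

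\textbf{Induction.} I will prove the following claim by induction on $m$: every weighting of $\binom U2$ with values in $\{0,\dots,4\}$ in which every triangle has weight at most $10$ has total weight at most $\gamma(m)$. The base cases are $m\in\{0,1,2\}$: here $\gamma(0)=\gamma(1)=0$, and for $m=2$ the single pair has weight at most $4=\gamma(2)$.

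For the inductive step with $m\ge3$, I split on whether weight $4$ occurs.
\begin{itemize}
\item If every pair has weight at most $3$, the total is at most $3\binom m2\le\gamma(m)$, using the second expression for $\gamma$.
\item Otherwise, pick a pair $xy$ with $\omega_S(xy)=4$. For each $z\in U\setminus\{x,y\}$ the triangle condition gives $\omega_S(xz)+\omega_S(yz)\le 6$. Hence the pairs meeting $\{x,y\}$ carry total weight at most $4+6(m-2)=6m-8$.
\end{itemize}
In the second case, apply the induction hypothesis to $U\setminus\{x,y\}$, which has $m-2$ vertices. By~\eqref{eq:g-two}, the total weight is at most
\[
\gamma(m-2)+6m-8=\gamma(m).
\]
This completes the induction, and the lemma follows.

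\textbf{Main obstacle.} There is no real difficulty here. The one point to check carefully is the translation from "triangle weight at least $11$" to a copy of $F^-_{4,3}$. This needs the completeness of $S$, which supplies the four core triples, and the paper's remark that the deleted triple may be chosen arbitrarily. The remaining work is choosing the right case split, namely whether some pair has full weight $4$, so that the two-vertex deletion matches $\gamma(m)-\gamma(m-2)$ exactly.
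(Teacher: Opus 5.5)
Your proposal is correct and follows the paper's proof essentially step for step. The triangle-weight bound of ten comes from forbidding $F^-_{4,3}$ with core $S$, and the induction in steps of two splits on whether some pair has weight four, closing with \eqref{eq:g-two}.
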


\begin{proof}
Fix $T\in\binom U3$. The sum $\sum_{xy\in\binom T2}\omega_S(xy)$ counts the twelve possible extensions $uxy$ with $u\in S$ and $xy\in\binom T2$. If this sum is at least eleven, then $H$ contains a copy of $F^-_{4,3}$ with core $S$ and outer set $T$, a contradiction. Therefore $\sum_{xy\in\binom T2}\omega_S(xy)\le10$.

We prove the bound by induction on $m$. The cases $m\le2$ are immediate. If $\omega_S(xy)\le3$ for every $xy\in\binom U2$, then the total weight is at most $3\binom m2\le\gamma(m)$. Otherwise, choose $xy$ with $\omega_S(xy)=4$. For every $z\in U\setminus\{x,y\}$, the preceding inequality implies that $\omega_S(xz)+\omega_S(yz)\le6$. Applying the induction hypothesis on $U\setminus\{x,y\}$ and using~\eqref{eq:g-two}, we obtain
\[
 \sum_{pq\in\binom U2}\omega_S(pq)
 \le\gamma(m-2)+4+6(m-2)=\gamma(m).
\]
\end{proof}

\begin{lemma}\label{lem:sparse-cut}
Every graph $G$ on six vertices with at most four edges contains a two-set $P\subseteq V(G)$ such that $|\partial_G(P)|\le1$.
\end{lemma}

\begin{proof}
If $G$ has an isolated vertex $x$, then its degree sum is at most
eight, so some other vertex $y$ has degree at most one and
$|\partial_G(\{x,y\})|\le1$. Otherwise $G$ is disconnected,
since a connected graph on six vertices has at least five edges.
If $G$ has a $K_2$ component, its vertices form the required pair.
If not, every component has at least three vertices, so
$G=P_3\cup P_3$. Two adjacent vertices in either component
have cut size one.
\end{proof}

\begin{lemma}\label{lem:K5-pair}
Let $H$ be an $F^-_{4,3}$-free $3$-graph and $X\subseteq V(H)$ span a $K_5^{(3)}$. Suppose that $u$ and $v$ are distinct vertices of $V(H)\setminus X$ with $|L(u,X)|=|L(v,X)|=8$. Then
\[
  |\{x\in X:xuv\in E(H)\}|\le3.
\]
\end{lemma}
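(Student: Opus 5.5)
The plan is to argue by contradiction and exhibit an explicit copy of $F^-_{4,3}$. Suppose at least four vertices $x\in X$ satisfy $xuv\in E(H)$, and let $W\subseteq X$ be a set of four of them. Since $|L(u,X)|=|L(v,X)|=8$ out of $\binom52=10$ possible pairs, write $M_u=\binom X2\setminus L(u,X)$ and $M_v=\binom X2\setminus L(v,X)$; each has exactly two pairs. Let $M$ be the multigraph $M_u+M_v$ on $X$, which has four edges (counted with multiplicity). Every candidate copy of $F^-_{4,3}$ will use a complete core and an outer triple built from $u$, $v$ and vertices of $X$. We then only need to check that at most one of the twelve core-vertex–outer-pair triples is missing.

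\emph{First attempt: core inside $X$.} Take the core $S=X\setminus\{z\}$ for some $z\in X$; it is complete because $X$ spans $K_5^{(3)}$. Take the outer set $\{u,v,z\}$. The triples are as follows.
\begin{itemize}
\item The triples $suv$ with $s\in S$ are present for every $s\in S\cap W$.
\item The triple $suz$ is present iff $sz\notin M_u$.
\item The triple $svz$ is present iff $sz\notin M_v$.
\end{itemize}
So the number of missing triples is $|S\setminus W|+d_M(z)$. If all five vertices of $X$ lie in the common neighbourhood of $uv$, we may choose $z$ freely. The degree sum of $M$ is $8$ over five vertices, so some $z$ has $d_M(z)\le1$, and we obtain at most one missing triple, hence $F^-_{4,3}$, a contradiction. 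If exactly four vertices work, then $W=X\setminus\{z_0\}$. Choosing $z=z_0$ gives $S=W$, and we are done unless $d_M(z_0)\ge2$. Choosing $z\in W$ costs one missing triple from $z_0\in S\setminus W$, so we are done unless every $z\in W$ has $d_M(z)\ge1$. Together these say: $d_M(z_0)\ge2$ and $M$ touches all of $W$. With only four edges, this forces a very restricted shape. For instance, $z_0$ has degree exactly $2$ and the remaining two edges cover the rest of $W$, or $z_0$ has degree $\ge 3$ with similar constraints. This leaves a short list of configurations of $(M_u,M_v)$.

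\emph{Second attempt: core containing $u$ (or $v$).} For the residual configurations, take the core $\{u,a,b,c\}$, where $abc\subseteq X$ and $ab,ac,bc\notin M_u$; it is complete since $abc\in E(H)$. Take the outer set $\{v,d,e\}=\{v\}\cup(X\setminus\{a,b,c\})$. The missing triples are counted as follows.
\begin{itemize}
\item $[de\in M_u]$ from $ude$.
\item $[d\notin W]+[e\notin W]$ from $uvd$ and $uve$.
\item The number of $M_v$-pairs between $\{a,b,c\}$ and $\{d,e\}$, from the triples $svd$ and $sve$.
\end{itemize}
The triples $sde$ with $s\in\{a,b,c\}$ are automatically present. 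Symmetrically, we can use the core $\{v,a,b,c\}$ with the roles of $M_u,M_v$ swapped. In each residual configuration, I would pick the triangle $abc$ and the ordering $u/v$ so that the total is at most one. Typically this means putting $z_0$ into $\{a,b,c\}$, so that the $W$-term vanishes, and arranging $abc$ to contain the $M_v$-edges at $z_0$.

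The main obstacle is this last step: the finite case analysis of how the two $2$-edge graphs $M_u,M_v$ sit relative to $z_0$. I expect it to be small, since $M$ has only four edges and $d_M(z_0)\ge2$ already pins most of them. However, one must check that in every case some triangle avoiding $M_u$ (respectively $M_v$) also separates the remaining missing pairs correctly. If a case resists, I would fall back on mixing: take a core with $u$ and two vertices of $X$ plus $v$, provided $u v x\in E(H)$ for the relevant $x$. This uses the abundance of edges $xuv$ with $x\in W$.
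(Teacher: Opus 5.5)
Your first step matches the paper's. Using the core $X\setminus\{z\}$ and outer set $\{u,v,z\}$, you correctly reduce to $W=X\setminus\{z_0\}$ with $d_M(z_0)\ge2$ and $d_M(x)\ge1$ for every $x\in W$.

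\textbf{The gap.} The proof stops after this reduction. The remaining case analysis is only announced, and the tool you plan to use for it does not cover all residual configurations. That tool is a core made of $u$ (or $v$) and a triangle $T\subseteq X$, with outer set $\{v\}\cup(X\setminus T)$ (or $\{u\}\cup(X\setminus T)$).

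For a concrete failure, let $W=\{x_1,x_2,x_3,x_4\}$, $M_u=\{z_0x_1,x_2x_3\}$ and $M_v=\{z_0x_1,x_1x_4\}$. The $M$-degrees of $z_0,x_1,\dots,x_4$ are $2,3,1,1,1$, so this configuration survives your first step.
\begin{itemize}
\item The triangles avoiding $M_u$ are $\{z_0,x_2,x_4\}$, $\{z_0,x_3,x_4\}$, $\{x_1,x_2,x_4\}$ and $\{x_1,x_3,x_4\}$.
\item The triangles avoiding $M_v$ are $\{x_1,x_2,x_3\}$, $\{z_0,x_2,x_3\}$, $\{z_0,x_2,x_4\}$, $\{z_0,x_3,x_4\}$ and $\{x_2,x_3,x_4\}$.
\end{itemize}
By your own count, each of these nine cores misses exactly two of the twelve triples. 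So no choice of triangle and of $u/v$ produces $F^-_{4,3}$ here.

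\textbf{The missing idea.} The core you mention only as a fallback is the one the paper actually relies on, and it comes with a clean count that you have not supplied. Suppose $x,y\in W$ and $xy\notin M$. Then $\{u,v,x,y\}$ is complete. With outer set $X\setminus\{x,y\}$, the only triples that can be missing are those formed by $u$ or $v$ with a pair inside $X\setminus\{x,y\}$. Hence exactly $4-d_M(x)-d_M(y)$ triples are missing. This splits the residual case in two.
\begin{itemize}
\item Suppose some $x\in W$ has $d_M(x)\ge2$. Because $d_M(z_0)\ge2$, at most two edges of $M$ lie inside $W$. So some $y\in W\setminus\{x\}$ has $xy\notin M$, and at most $4-2-1=1$ triple is missing. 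In the example above, $x=x_1$, $y=x_2$ gives none.
\item Otherwise every vertex of $W$ has $M$-degree one. This forces $d_M(z_0)=4$, $M_u=\{z_0p,z_0p'\}$ and $M_v=\{z_0r,z_0r'\}$ with $W=\{p,p',r,r'\}$. Only in this configuration does your triangle construction finish the argument: the core $\{u,z_0,r,r'\}$ with outer set $\{v,p,p'\}$ contains all twelve triples.
\end{itemize}
Without this dichotomy and the count $4-d_M(x)-d_M(y)$, the proposal does not prove the lemma.
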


\begin{proof}
Let $M_u$ and $M_v$ consist of the two pairs missing from $L(u,X)$ and $L(v,X)$, respectively, and let $M$ be their multiset union. Write
$C=\{x\in X:xuv\in E(H)\}$ and let $d(x)$ be the degree
of $x$ in $M$, counted with multiplicity.

Suppose that $|C|\ge4$. For each $z\in X$, the core $X\setminus\{z\}$ and
outer set $\{z,u,v\}$ have exactly
$d(z)+|(X\setminus C)\setminus\{z\}|$ missing extensions.
This number is at least two, since otherwise these vertices contain $F^-_{4,3}$. Since $\sum_{z\in X}d(z)=8$,
we cannot have $C=X$. Thus $X\setminus C=\{q\}$,
$d(q)\ge2$, and $d(x)\ge1$ for every $x\in C$.

Since $d(q)\ge2$ and $M$ has four edges, at most two lie inside $C$. If $d(x)\ge2$
for some $x\in C$, choose $y\in C\setminus\{x\}$ with
$xy\notin M$. Then $\{u,v,x,y\}$ is complete, and its
missing extensions to the outer set $X\setminus\{x,y\}$
are precisely the occurrences of $M$ disjoint from $\{x,y\}$.
Their number is $4-d(x)-d(y)\le1$, a contradiction.

Consequently, every vertex of $C$ has degree one and all four
edges of $M$ contain $q$. Relabel $C=\{x,y,z,p\}$ so that
$M_v=\{qx,qy\}$ and $M_u=\{qz,qp\}$.
The core $\{v,q,z,p\}$ is complete and all twelve extensions
to the outer set $\{x,y,u\}$ are present. This is again a
copy of $F^-_{4,3}$, completing the proof.
\end{proof}

The next lemma controls links and pair codegrees around the two six-vertex $3$-graphs with at least nineteen edges.

\begin{lemma}\label{lem:six-links}
Let $H$ be an $F^-_{4,3}$-free $3$-graph and let $X\in\binom{V(H)}6$.
\begin{enumerate}[label=\textup{(\roman*)},leftmargin=2.2em]
\item If $H[X]=K_6^{(3)}$, then $|L(u,X)|\le10$ for every $u\in V(H)\setminus X$.
\item Suppose $X=A\cup B$, $|A|=|B|=3$ and $H[X]=K_6^{(3)}\setminus\{A\}$. Then the following statements hold.
\begin{enumerate}[label=\textup{(\alph*)},leftmargin=2.2em]
\item We have $|L(u,X)|\le12$ for every $u\in V(H)\setminus X$. If equality holds for some $u$, then
\begin{equation}
  L(u,X)=\binom X2\setminus\binom A2. \label{eq:equality-link}
\end{equation}
We call the link in~\eqref{eq:equality-link} the equality-case link.
\item If $u$ and $v$ are distinct vertices outside $X$ that both have the equality-case link, then
\[
  |\{x\in X:xuv\in E(H)\}|\le3.
\]
\item If $u$ and $z$ are distinct vertices outside $X$ such that $u$ has the equality-case link and $xuz\in E(H)$ for every $x\in X$, then $|L(z,X)|\le10$.
\end{enumerate}
\end{enumerate}
\end{lemma}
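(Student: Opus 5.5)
For (i), I would fix $u\notin X$ and let $G$ be the complement of $L(u,X)$ in $\binom X2$, i.e.\ the graph of missing pairs. For any $P=\{y,z\}\subseteq X$, the core $S=X\setminus P$ is complete and the outer set is $\{y,z,u\}$. The four extensions of $yz$ by $S$ are present because $H[X]$ is complete. The remaining eight extensions are the pairs $sy,sz$ with $s\in S$, taken in the link of $u$, and the missing ones among them are exactly the edges of $\partial_G(P)$. Freeness therefore forces $|\partial_G(P)|\ge2$ for every $2$-set $P$. By Lemma~\ref{lem:sparse-cut}, $G$ has at least five edges, so $|L(u,X)|\le 15-5=10$.

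For (ii)(a), I would run the same argument using only the complete $4$-subsets of $X$. These are the $4$-sets not containing $A$.
\begin{itemize}[leftmargin=2.2em]
\item Take $P=\{a,b\}$ with $a\in A$, $b\in B$. The core $X\setminus P$ contains two vertices of $A$ and is complete. All four extensions of $ab$ are present, because no such triple equals $A$. Hence $|\partial_G(P)|\ge2$.
\item Take $P=\{a,a'\}\subseteq A$. The core is $B\cup\{a''\}$, where $a''$ is the third vertex of $A$. The triple $aa'a''=A$ is already one missing extension, so $|\partial_G(P)|\ge1$.
\end{itemize}
Summing the first inequality over the nine pairs $\{a,b\}$ gives a total of at least $18$. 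An edge of $G$ contributes $6$ to this sum if it lies inside $A$ or inside $B$, and $4$ if it crosses. Hence $|G|\ge3$, which gives $|L(u,X)|\le12$.

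At equality, $G$ consists of three edges lying inside $A$ or inside $B$. The condition $d_G(a)+d_G(b)\ge2$ must hold for all $a\in A$, $b\in B$. I would split into two cases.
\begin{itemize}[leftmargin=2.2em]
\item If some vertex of $B$ is isolated in $G$, then every vertex of $A$ has degree at least two, so $G$ is the triangle on $A$.
\item Otherwise at least two edges lie in $B$, so at most one lies in $A$ and some $a\in A$ is isolated. Then every vertex of $B$ has degree at least two, forcing a triangle on $B$. That leaves no edge on $A$, which violates the condition $|\partial_G(\{a,a'\})|\ge1$.
\end{itemize}
So $G=\binom A2$, which is~\eqref{eq:equality-link}.

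For (ii)(b), let $C=\{x\in X:xuv\in E(H)\}$. I would show that $C$ cannot contain both some $a\in A$ and some $b\in B$; since $|B|=3$, this forces $|C|\le3$. Suppose $a,b\in C$. The set $\{u,v,a,b\}$ is complete: $uva,uvb\in E(H)$ by choice, and $uab,vab\in E(H)$ by the equality-case links. Take as outer set $\{a',b',b''\}$, where $a'\in A\setminus\{a\}$ and $\{b',b''\}=B\setminus\{b\}$. None of the three outer pairs lies in $\binom A2$, so all their extensions by $u$ and $v$ are present. The extensions by $a$ and $b$ are triples of $X$ different from $A$, so they are present too. This gives all twelve extensions, a copy of $F^-_{4,3}$.

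For (ii)(c), suppose $|L(z,X)|\ge11$, so that the missing graph $G_z$ of $z$ has at most four edges. I would use two families of cores.
\begin{itemize}[leftmargin=2.2em]
\item For $a\in A$, $b\in B$ with $zab\in E(H)$, the core $\{u,z,a,b\}$ is complete. With outer set $\{a',b',b''\}$, all extensions by $u,a,b$ are present, so at least two of the pairs $a'b',a'b'',b'b''$ must lie in $G_z$.
\item For $b,b'\in B$ with $zbb'\in E(H)$, the core $\{u,z,b,b'\}$ is complete. With outer set $\{a,a',b''\}$, one extension $uaa'$ is already missing, so at least one of $aa',ab'',a'b''$ lies in $G_z$.
\end{itemize}
Since $G_z$ has at most four edges, most pairs $ab$ and $bb'$ are present in $L(z,X)$, and each present pair imposes one of these constraints. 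The main obstacle is the resulting finite case analysis on the structure of $G_z$. I would organise it by $e(G_z)\cap\binom B2$. If some $ab$ is present whose complementary triangle $\{a',b',b''\}$ contains at most one edge of $G_z$, we are done immediately. A short count over the nine choices of $(a,b)$ should show that four missing edges cannot cover all such triangles twice while also avoiding the excluded $ab$ pairs. If that count leaves residual configurations, I would also invoke cores of the form $\{z\}\cup$ three vertices of $X$, possibly combined with (ii)(a), to close them.
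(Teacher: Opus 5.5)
\textbf{Parts (i), (ii)(a), (ii)(b).} Your treatment of (i) and (ii)(a) is the paper's argument, phrased for the missing-pair graph $G$ instead of the link. At equality, your two-case split replaces the paper's remark that the degrees are constant on $A$ and on $B$. You exclude the triangle on $B$ with the same core $\{a''\}\cup B$ as the paper.

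Your proof of (ii)(b) is correct but takes a different route. The paper takes each of the nine four-sets $S\subseteq X$ with two vertices on each side as core, with outer set $\{y,u,v\}$ for $y\in B\setminus S$. It deduces $|C\cap S|\le2$ and averages. Your complete core $\{u,v,a,b\}$ instead gives the stronger structural conclusion that $C\subseteq A$ or $C\subseteq B$.

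\textbf{Part (ii)(c) has a gap.} Your two families of constraints are correctly derived, but you never show that they are incompatible with $e(G_z)\le4$. Phrases such as ``should show'' and ``if that count leaves residual configurations'' mean that (c) is unproved as written.

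Your first family alone closes it. For $a\in A$ and $b\in B$, put $T_{a,b}=\{a\}\cup(B\setminus\{b\})$. The first family gives $e(G_z[T_{a,b}])\ge2$ unless both pairs joining $b$ to $A\setminus\{a\}$ lie in $G_z$; call $T_{a,b}$ exempt in that case. Suppose $b$ meets $c_b$ cross edges of $G_z$. Then the number of exempt sets $T_{a,b}$ for this $b$ is $0,0,1,3$ when $c_b=0,1,2,3$, and hence at most $c_b$. A cross edge of $G_z$ lies in exactly two of the nine sets $T_{a,b}$, an edge inside $B$ lies in exactly three, and an edge inside $A$ lies in none. Let $G_z$ have $c$ cross edges and $\beta$ edges inside $B$. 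At least $9-c$ sets are not exempt, so
\[
 2(9-c)\le\sum_{a\in A,\,b\in B}e(G_z[T_{a,b}])=2c+3\beta .
\]
Hence $18\le4c+3\beta\le4e(G_z)\le16$, a contradiction. Your second family and the further cores are not needed.

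\textbf{Comparison with the paper for (c).} The paper avoids any analysis of $G_z$. For $x\in A$, it applies (a) to the six-set $Y=(X\setminus\{x\})\cup\{u\}$, whose only missing triple is $\{u\}\cup(A\setminus\{x\})$. Since $zua\in E(H)$ for $a\in A\setminus\{x\}$, $L(z,Y)$ is not the equality-case link of $Y$, so $|L(z,Y)|\le11$. This link contains all five pairs joining $u$ to $X\setminus\{x\}$, so $|L(z,X\setminus\{x\})|\le6$. Summing over $x\in A$ gives $2|L(z,X)|-3\le18$. The paper's route recycles (a) by shifting the six-set. Your route, once completed as above, is a self-contained double count with explicit $F^-_{4,3}$ cores.
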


\begin{proof}
For (i), fix $u\in V(H)\setminus X$ and let $M$ be the graph on $X$ with $E(M)=\binom X2\setminus L(u,X)$. Suppose that $e(M)\le4$. By Lemma~\ref{lem:sparse-cut}, there is a two-set $P\subseteq X$ such that $|\partial_M(P)|\le1$. Since $H[X]$ is complete, the four-set $X\setminus P$ is complete and the pair $P$ has four extensions in this four-set. Moreover, $|\partial_{L(u,X)}(P)|=8-|\partial_M(P)|\ge7$. Relative to the core $X\setminus P$ and the outer set $P\cup\{u\}$, the other two outer pairs therefore have at least seven extensions in total. Hence $H$ contains a copy of $F^-_{4,3}$ with this core and outer set, a contradiction. Thus $e(M)\ge5$ and consequently $|L(u,X)|=15-e(M)\le10$.

Assume now the setting of (ii). To prove (a), fix $u\in V(H)\setminus X$. For every mixed pair $P=\{x,y\}$, where $x\in A$ and $y\in B$, the four-set $X\setminus P$ is complete and $P$ has four extensions in this four-set. We must have $|\partial_{L(u,X)}(P)|\le6$; otherwise $H$ contains a copy of $F^-_{4,3}$ with core $X\setminus P$ and outer set $P\cup\{u\}$, a contradiction. Let $s$ and $t$ be the numbers of same-side and cross pairs in $L(u,X)$, respectively. Summing this inequality over the nine mixed pairs counts each same-side edge six times and each cross edge four times, so $6s+4t\le54$. Since $t\le9$, we have $6(s+t)\le54+2t\le72$ and therefore $|L(u,X)|=s+t\le12$.

Suppose that equality holds. Since $t\le9$, we have $54\ge6s+4t=72-2t\ge54$. Thus $t=9$, $s=3$ and every mixed-pair cut has size six. For $x\in A$ and $y\in B$, exactly four of the nine cross edges lie in the cut determined by $\{x,y\}$. Hence $d_{L(u,A)}(x)+d_{L(u,B)}(y)=2$. Comparing this identity for two choices of $x$ and for two choices of $y$ shows that the degrees are constant on each of $A$ and $B$. Since a simple graph on three vertices cannot be one-regular, one of $L(u,A)$ and $L(u,B)$ is a triangle and the other is empty. Suppose that the triangle lies in $A$. For any $x\in A$, relative to the core $\{x\}\cup B$ and the outer set $(A\setminus\{x\})\cup\{u\}$, the pair $A\setminus\{x\}$ has weight three and each pair containing $u$ has weight four. The total weight is eleven, so $H$ contains a copy of $F^-_{4,3}$ with core $\{x\}\cup B$ and outer set $(A\setminus\{x\})\cup\{u\}$, a contradiction. Thus $L(u,B)$ is the triangle and~\eqref{eq:equality-link} follows.

For (b), write $C=\{x\in X:xuv\in E(H)\}$. Let $S$ be
any four-set containing two vertices from each of $A$ and $B$,
and let $y$ be the vertex of $B\setminus S$. Relative to the
complete core $S$ and outer set $\{y,u,v\}$, the pairs $yu$
and $yv$ both have weight four. Hence $|C\cap S|\le2$.
Summing over the nine choices of $S$, each vertex occurs
six times, so $6|C|\le18$ and therefore $|C|\le3$.

For (c), fix $x\in A$ and let $Y=(X\setminus\{x\})\cup\{u\}$.
The induced graph $H[Y]$ is complete except for the triple
$\{u\}\cup(A\setminus\{x\})$. By part (a), $|L(z,Y)|\le12$.
Equality would require both pairs joining $u$ to
$A\setminus\{x\}$ to be missing from $L(z,Y)$, whereas
the hypothesis includes both. Thus $|L(z,Y)|\le11$.
All five pairs joining $u$ to $X\setminus\{x\}$ belong to
$L(z,Y)$, so $|L(z,X\setminus\{x\})|\le6$.
Summing over $x\in A$, every edge of $L(z,X)$ is counted
at least twice except the at most three edges inside $A$,
which are counted once. Therefore
\[
 2|L(z,X)|-3
 \le\sum_{x\in A}|L(z,X\setminus\{x\})|
 \le18.
\]
Since $|L(z,X)|$ is an integer, it is at most ten.
\end{proof}

The initial cases also require the following seven-vertex bound. We include the classification of the thirty-edge graphs under the additional $K_5^{(3)}$-free assumption.

\begin{lemma}\label{lem:seven}
Let $H$ be an $F^-_{4,3}$-free $3$-graph on seven vertices.
\begin{enumerate}[label=\textup{(\roman*)},leftmargin=2.2em]
\item We have $e(H)\le31$. Equality holds if and only if $H\cong K_7^{(3)}\setminus K_4^{(3)}$.

\item If $H$ is also $K_5^{(3)}$-free and $e(H)=30$, then $H\cong B_7$.
\end{enumerate}
\end{lemma}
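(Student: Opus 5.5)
Since $31>\frac34\binom73=26.25$, Observation~\ref{obs:four-set-incidence} gives a complete four-set $S$ in any seven-vertex $H$ with at least $27$ edges. With $U=V(H)\setminus S$ of size three, we split
\[
e(H)=4+\sum_{xy\in\binom U2}\omega_S(xy)+|\mathcal E_2(S)|+e(H[U]).
\]
By Lemma~\ref{lem:link} with $m=3$ (equivalently the ``at most ten'' observation in its proof), the middle sum is at most $10$. Moreover $|\mathcal E_2(S)|\le 6\cdot3=18$ and $e(H[U])\le1$, so $e(H)\le 33$. The work is in cutting two edges from this trivial count and then analysing equality.

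For (i), first suppose $H$ contains a $K_5^{(3)}$ on a set $X$, and let $u,v$ be the other two vertices. Any six-set containing $X$ is either $K_6^{(3)}$ or not. If $H[X\cup\{u\}]=K_6^{(3)}$, then Lemma~\ref{lem:six-links}(i) gives $|L(v,X\cup\{u\})|\le10$, so $e(H)\le20+10=30$. Otherwise the links satisfy $|L(u,X)|,|L(v,X)|\le9$. Applying the argument of Lemma~\ref{lem:six-links}(i) to the five-set $X$ (core $X\setminus P$ with $|P|=1$ and outer set $P\cup\{u,w\}$) should give $|L(u,X)|\le 8$, and similarly for $v$. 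The number of edges containing both $u$ and $v$ is at most $5$, and Lemma~\ref{lem:K5-pair} bounds it by $3$ when both links have size $8$. This yields $e(H)\le10+8+8+3=29$, or a small variant when a link is smaller. The extremal graph $K_7^{(3)}\setminus K_4^{(3)}$ has $31$ edges, and its complement $K_4^{(3)}$ on $\{a,b,c,d\}$ means every $K_5$ must contain at most three of these vertices. I need to recheck this case carefully: in $K_7\setminus K_4$ the three vertices outside the $K_4$ together with two inside form a $K_5$, so the $K_5$ case must actually allow $31$ edges. The plan is therefore to do the $K_5$ case by direct link counting, using the missing-pair structure of each outside link together with Lemma~\ref{lem:K5-pair} and Lemma~\ref{lem:sparse-cut}, and to show that $31$ edges forces the missing triples to form a $K_4^{(3)}$.

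If $H$ is $K_5^{(3)}$-free, the goal is $e(H)\le30$. Every complete four-set $S$ then extends to no complete five-set. Hence for each $x\in U$, the link $L(x,S)$ misses at least one pair, giving $|\mathcal E_2(S)|\le 15$ and $e(H)\le 4+10+15+1=30$. This proves (i) in this case, with strict inequality below $31$.

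For (ii), equality in the $K_5^{(3)}$-free case forces the following for every complete four-set $S$:
\begin{itemize}
\item[] $U$ is an edge;
\item[] the total weight on $\binom U2$ is exactly $10$;
\item[] each $x\in U$ misses exactly one pair of $S$.
\end{itemize}
Running the $F^-_{4,3}$ constraints over the cores $S'$ formed from $U$ and two vertices of $S$, the missing pairs should be forced into a perfect matching pattern that splits $S$ as $2+2$. Together with $U$, this gives a $3+4$ bipartition in which all non-edges lie inside the parts, so $H\cong B_7$.

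The main obstacle I expect is the $K_5^{(3)}$-containing case of (i): pinning down $31$ and the uniqueness of $K_7^{(3)}\setminus K_4^{(3)}$ requires a careful case analysis of the missing pairs, which I would organise by the number of complete five-sets.
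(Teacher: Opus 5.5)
Your $K_5^{(3)}$-free half of (i) is correct and quick: around any complete four-set you get $4+10+15+1=30$. The genuine gap is the other half, $H\supseteq K_5^{(3)}$, which is where the value $31$ and the uniqueness are decided. That half is not proved.

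\begin{itemize}
\item The step you propose there, $|L(u,X)|\le8$ when no six-set containing $X$ is complete, is false. In $K_7^{(3)}\setminus K_4^{(3)}$ with missing four-set $\{a,b,c,d\}$, let $X$ be the three remaining vertices together with $a,b$. No six-set is complete, yet $|L(c,X)|=|L(d,X)|=9$.
\item The suggested core $X\setminus\{p\}$ with outer set $\{p,u,v\}$ cannot bound a single link anyway, because the codegree of $uv$ enters the count.
\item You notice the clash with the extremal graph but leave the case as a plan. So neither $e(H)\le31$ nor uniqueness is established, and the converse (that $K_7^{(3)}\setminus K_4^{(3)}$ is $F^-_{4,3}$-free) is never checked.
\item Part (ii) is likewise only asserted (``should be forced''). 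Your route can be completed by applying the three equality conditions to the complete four-set $(S\setminus\{q\})\cup\{x\}$ for $q$ in the missing pair of $x$, as in the equality part of Lemma~\ref{lem:engine}. This makes distinct missing pairs complementary and then splits $U$ as $2+1$. But that derivation is the proof, and it is absent.
\end{itemize}

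The $K_5^{(3)}$ case closes within your framework with one more averaging.

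\begin{itemize}
\item If $X\cup\{u\}$ or $X\cup\{v\}$ is complete, Lemma~\ref{lem:six-links}(i) already gives $e(H)\le30$. Otherwise let $M$ be the multiset of the $k\ge2$ pairs of $\binom X2$ missing from the two links, and let $C=\{z\in X:zuv\in E(H)\}$. Then $e(H)=30-k+|C|$.
\item For each $z\in X$, the core $X\setminus\{z\}$ with outer set $\{z,u,v\}$ needs $d_M(z)+|(X\setminus C)\setminus\{z\}|\ge2$. Summing over $z$ gives $2k+4(5-|C|)\ge10$. Hence $e(H)\le32.5-k/2$, so $e(H)\le31$, with equality only for $(k,|C|)\in\{(2,3),(3,4)\}$.
\item Next use the complete cores $\{u,v,y,y'\}$ with $y,y'\in C$ and $yy'\notin M$. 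Their missing extensions are exactly the members of $M$ inside $X\setminus\{y,y'\}$. This excludes $(3,4)$, and for $(2,3)$ it forces both missing pairs to equal $X\setminus C$, which gives $K_7^{(3)}\setminus K_4^{(3)}$.
\end{itemize}

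The paper avoids the $K_5^{(3)}$ split altogether by working with the at most four missing triples. The complement of any three-element transversal is a complete core, so at least two missing triples meet such a transversal in two vertices. This forces a two-element transversal, which is equivalent to a $K_5^{(3)}$. Lemma~\ref{lem:sparse-cut} and a cut count over nine three-element transversals through a high-degree vertex then identify the missing triples as $\binom W3$ for a four-set $W$. The same count with five missing triples and no two-element transversal yields (ii).
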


\begin{proof}
Let $\cM=\binom{V(H)}3\setminus E(H)$ be the family of missing triples, and let $d(x)$ count its members containing $x$. A transversal of $\cM$ is a vertex set meeting every member. Every three-element transversal $T$ satisfies
\begin{equation}
 \bigl|\{M\in\cM:|M\cap T|=2\}\bigr|\ge2. \label{eq:transversal}
\end{equation}
Otherwise, $H$ contains $F^-_{4,3}$ with the complete core $V(H)\setminus T$ and outer set $T$.

For (i), suppose that $|\cM|\le4$. There is a transversal of size at most three: for four members, choose a common vertex of two intersecting members and one vertex from each of the others. A minimum transversal of size three has three distinct members of $\cM$ meeting it in one vertex each, leaving at most one member that meets it twice. This contradicts~\eqref{eq:transversal}. Thus $\cM$ has a two-element transversal $\{x,y\}$. Applying~\eqref{eq:transversal} to the five triples $\{x,y,z\}$, we obtain
\[
 10\le\sum_{z\notin\{x,y\}}
 \bigl|\{M\in\cM:|M\cap\{x,y,z\}|=2\}\bigr|
 =2(d(x)+d(y)).
\]
Indeed, a member containing just one of $x,y$ contributes twice, and a member containing both contributes four times. We may therefore assume that $d(x)\ge3$. If every member contains $x$, its two-element trace on $V(H)\setminus\{x\}$ forms an edge of a graph with at most four edges. By Lemma~\ref{lem:sparse-cut}, some pair $P$ has cut size at most one, contradicting~\eqref{eq:transversal} for $\{x\}\cup P$. Consequently, $|\cM|=4$ and $d(x)=3$.

Let $A$ be the unique member avoiding $x$ and let $B=V(H)\setminus(A\cup\{x\})$. The three members containing $x$ have traces forming a three-edge graph $G$ on $A\cup B$. For every $u\in A$ and $v\in B$, the triple $\{x,u,v\}$ is a transversal, so~\eqref{eq:transversal} implies $|\partial_G(\{u,v\})|\ge2$. Let $c$ count the edges of $G$ between $A$ and $B$. Summing these nine inequalities, we obtain
\[
 18\le\sum_{u\in A,\,v\in B}|\partial_G(\{u,v\})|
 =6(3-c)+4c=18-2c.
\]
Thus $c=0$ and every cut has size two. In particular, for any $u\in A$ and $ v\in B$, we have $d_{G[A]}(u)+d_{G[B]}(v)=2.$ Both induced graphs are regular. A regular simple graph on three vertices is either empty or a triangle, so exactly one of $G[A],G[B]$ is a triangle. If it is $G[B]$, then $x$ together with two vertices of $A$ forms a transversal meeting only $A$ twice, contrary to~\eqref{eq:transversal}. Hence $G[A]$ is the triangle and $\cM=\binom{A\cup\{x\}}3$.

This proves $e(H)\le31$ and the necessity of the equality condition. Conversely, $K_7^{(3)}\setminus K_4^{(3)}$ is $F^-_{4,3}$-free: any copy would be spanning and would have an independent four-set, whereas every four-set of $F^-_{4,3}$ contains an edge.

For (ii), $|\cM|=5$, and the $K_5^{(3)}$-free assumption means that $\cM$ has no two-element transversal. Hence $d(x)\le3$ for every vertex $x$: a vertex in at least four members, together with one vertex from the remaining member if necessary, would meet all five. Since $\sum_xd(x)=15>14$, some vertex $x$ has degree three. The two members $A,B$ avoiding $x$ must be disjoint, since otherwise $x$ and a common vertex would form a two-element transversal. Thus $V(H)\setminus\{x\}=A\cup B$.

Again let $G$ be the three-edge graph formed by the traces of the members containing $x$. For $u\in A$ and $v\in B$, both $A$ and $B$ meet $\{x,u,v\}$ once, so the same cut calculation shows that $G$ is a triangle on one part and empty on the other. Interchanging $A,B$ if necessary, we have
 $\cM=\binom{A\cup\{x\}}3\cup\{B\}$.
These are precisely the monochromatic triples of a $4+3$ bipartition, so $H\cong B_7$.
\end{proof}

\begin{remark}
The preceding lemma identifies the last exceptional order before the range of Theorem~\ref{thm:main}. For $n\le6$, we have $\ex(n,F^-_{4,3})=\binom n3$, whereas $\ex(7,F^-_{4,3})=31$ by Lemma~\ref{lem:seven}(i), with a unique extremal $3$-graph. Since $b(5)=9$, $b(6)=18$ and $b(7)=30$, the bipartite formula fails exactly at orders five, six and seven. Together with the cases $n\le6$, Theorem~\ref{thm:main} and Lemma~\ref{lem:seven}(i) therefore determine $\ex(n,F^-_{4,3})$ for every $n$.
\end{remark}

\section{Proof of the main theorem}\label{sec:proof}

The next lemma contains the inductive step for both the upper bound and the equality case.

\begin{lemma}\label{lem:engine}
Let $n\ge8$ and let $H$ be an $F^-_{4,3}$-free $3$-graph on $n$ vertices. Suppose that
\begin{equation}
  e(H-R)\le b(n-4)\qquad\text{for every complete four-set $R\subseteq V(H)$.} \label{eq:delete-assumption}
\end{equation}
Then $e(H)\le b(n)$, with equality if and only if $H\cong B_n$.

\end{lemma}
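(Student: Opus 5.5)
The plan is to argue by contradiction for the upper bound and by tightness analysis for the equality case, using $\psi_H(S)$ for a complete four-set $S$. First I would take $e(H)\ge b(n)$. Since $b(n)>\frac34\binom n3$, Observation~\ref{obs:four-set-incidence} gives a complete four-set $S$. Put $U=V(H)\setminus S$ with $|U|=n-4$. Then
\[
\psi_H(S)=|\mathcal E_3(S)|+|\mathcal E_2(S)|+|\mathcal E_1(S)|\le 4+6(n-4)+\gamma(n-4),
\]
where $|\mathcal E_1(S)|\le\gamma(n-4)$ comes from Lemma~\ref{lem:link}. Combining this with \eqref{eq:delete-assumption} and \eqref{eq:b-inc}, I get
\[
e(H)\le b(n-4)+\gamma(n-4)+6(n-4)+4=b(n)+(n-4).
\]
This is not sharp, so the term $|\mathcal E_2(S)|$ must be improved. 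The real content is that $|\mathcal E_2(S)|\le 5(n-4)$ in an averaged sense: each $u\in U$ should have at most five of the six pairs of $S$ in its link.

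The key dichotomy is as follows. If some $u\in U$ has all six pairs of $S$ in $L(u,S)$, then $S\cup\{u\}$ spans a $K_5^{(3)}$. I would therefore choose $S$ inside a $K_5^{(3)}$ whenever possible and split into two cases.

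\emph{Case 1: $H$ is $K_5^{(3)}$-free.} Then every $u$ has $|L(u,S)|\le5$, so $|\mathcal E_2(S)|\le5(n-4)$ and $e(H)\le b(n)$ immediately.

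\emph{Case 2: $H$ contains some $X$ spanning $K_5^{(3)}$.} I would delete a four-subset $R\subset X$ and instead exploit the five-set $X$ directly. Let $Y=V(H)\setminus X$, a set of $n-5$ vertices. For each $u\in Y$, the link $L(u,X)$ has at most $8$ edges; this needs a check like Lemma~\ref{lem:six-links}(i), via a sparse-cut argument on the complement in $K_5$, and if $|L(u,X)|\ge 9$ then $X\cup\{u\}$ gives at least nineteen edges on six vertices. Pairs of vertices with link size $8$ have codegree at most $3$ into $X$ by Lemma~\ref{lem:K5-pair}. Counting edges meeting $X$ and comparing with $b(n)-b(n-5)$ should again give slack, unless some six-set carries $\ge19$ edges. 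Those six-sets are exactly $K_6^{(3)}$ or $K_6^{(3)}\setminus\{A\}$, which is how the outline describes it. For these I would redo the count around the six-set $X'$, bounding:
\begin{itemize}
\item the links into $X'$ using Lemma~\ref{lem:six-links}(i) and (ii)(a);
\item the codegrees between two equality-case vertices using (ii)(b) and (c);
\item the edges inside $V\setminus X'$ using the induction hypothesis through a four-subset of $X'$.
\end{itemize}
The aim is to derive $e(H)<b(n)$.

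The main obstacle is making this six-vertex accounting sharp. I would try deleting a complete four-set $R\subset X'$ and writing $e(H)=e(H-R)+\psi(R)$. The two remaining vertices of $X'\setminus R$ each have full link on $R$, so they cost nothing extra. The deficit must then come from the other outside vertices, each losing at least one pair of $R$ from its link. That loss would be forced via Lemma~\ref{lem:six-links} by averaging over the choices of $R\subset X'$.

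For equality, every inequality above must be tight in Case 1. So $e(H-S)=b(n-4)$, $|L(u,S)|=5$ for every $u\in U$, and the weighted link graph on $U$ attains $\gamma(n-4)$. The missing pair $m_u\in\binom S2$ must be compatible between vertices. For $u,v\in U$, the condition that no $F^-_{4,3}$ is formed with core $S$ forces the missing pairs to come in two complementary types, say $\{s_1s_2\}$ versus $\{s_3s_4\}$. This bipartitions $U\cup S$. Induction ($H-S\cong B_{n-4}$) then fixes everything, and one checks that the bipartitions glue to $B_n$.
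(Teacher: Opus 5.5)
Your Case~1 is correct and your skeleton matches the paper's. However, the $K_5^{(3)}$ case, which is the heart of the lemma, is not carried out, and the route you sketch would not work as stated.

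The hypothesis bounds $e(H-R)$ only for complete four-sets $R$, so there is no inequality $e(H-X)\le b(n-5)$ against which to compare $\psi(X)$. The paper instead applies the hypothesis to each of the five four-subsets of $X$ and sums. Edges inside $X$ or in $\mathcal E_2(X)$ are counted five times and edges of $\mathcal E_1(X)$ four times. Together with Lemma~\ref{lem:link} over the same cores, this gives $|\mathcal E_2(X)|\ge 8(n-5)-1$.

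No local argument can give $|L(u,X)|\le 8$, since $K_6^{(3)}$ is itself $F^-_{4,3}$-free. A nineteen-edge six-set is instead forced globally. If all links had at most eight edges, Lemma~\ref{lem:K5-pair} would cap $|\mathcal E_1(X)|$ near $3\binom{n-5}{2}$, below the required $\frac54\gamma(n-5)-\frac54$. A separate count handles the case where one link has seven edges.

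For the six-set $X'$, averaging over its four-subsets is the right idea. An $\mathcal E_2$-deficit does settle $H[X']=K_6^{(3)}$, where by Lemma~\ref{lem:six-links}(i) each outside vertex averages only four pairs per core. But for $H[X']=K_6^{(3)}\setminus\{A\}$ your heuristic looks in the wrong place. An outside vertex with the equality-case link gives exactly five pairs to every balanced core $R$, and the two vertices of $X'\setminus R$ give $5+6$, so $|\mathcal E_2(R)|$ can be $5(n-4)+1$. The contradiction must come from $\mathcal E_1$. A weighted count over the nine balanced cores shows that at most one outside link is not of equality type, and that one has eleven edges. Lemma~\ref{lem:six-links}(ii)(b),(c) then cap outside codegrees at three (resp.\ five). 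This leaves $|\mathcal E_1(X')|$ short of the roughly $\frac32\gamma(n-6)$ the count demands.

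For the equality case, you invoke $H-S\cong B_{n-4}$, but the lemma assumes only $e(H-R)\le b(n-4)$, not uniqueness at order $n-4$. Indeed, at order seven a thirty-edge $F^-_{4,3}$-free graph need not be $B_7$: delete from $K_7^{(3)}\setminus K_4^{(3)}$ a triple meeting the missing four-set in two vertices. Moreover, copies of $F^-_{4,3}$ with core $S$ use only edges of $\mathcal E_3(S)\cup\mathcal E_1(S)$. So they cannot constrain the missing pairs $\mu(x)\in\binom S2$, which concern $\mathcal E_2(S)$.

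The missing idea is that your tightness argument applies to every complete four-set, so every vertex outside any complete four-set misses exactly one of its pairs. Apply this to the complete four-sets $(S\setminus\{q\})\cup\{x\}$ with $q\in\mu(x)$. This constrains $D(x,y)=\{q\in S:qxy\notin E(H)\}$ enough to force two complementary types $P,Q$ and to determine $D$, using also $|\mathcal E_1(S)|=\gamma(n-4)>3\binom{n-4}{2}$ and a copy with core $S$. Each type class is then independent: an edge $xyz$ inside $U_P$ would make $\{q,x,y,z\}$ with $q\in Q$ complete, while the link of any $p\in P$ on it misses three pairs. So $H$ is two-colorable with parts $P\cup U_P$ and $Q\cup U_Q$, and counting the triples that meet both parts forces $H\cong B_n$.
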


\begin{proof}
We first prove the upper bound and show that equality excludes a $K_5^{(3)}$. Suppose either that $e(H)\ge b(n)+1$, or that $e(H)=b(n)$ and $H$ contains a $K_5^{(3)}$. Let $\delta=1$ in the first case and $\delta=0$ in the second. We will derive a contradiction in both cases. By~\eqref{eq:delete-assumption} and~\eqref{eq:b-inc}, every complete four-set $R$ satisfies
\begin{equation}
  \psi(R)\ge\gamma(n-4)+5(n-4)+4+\delta. \label{eq:psi-core}
\end{equation}
When $\delta=1$, Observation~\ref{obs:four-set-incidence} supplies a complete four-set $R_0$. Lemma~\ref{lem:link} bounds $|\mathcal E_1(R_0)|$ by $\gamma(n-4)$, so~\eqref{eq:psi-core} implies $|\mathcal E_2(R_0)|\ge5(n-4)+1$. Hence some vertex outside $R_0$ is joined to all six pairs of $R_0$ and $R_0$ extends to a $K_5^{(3)}$. When $\delta=0$, such a $K_5^{(3)}$ is part of the assumption. In either case, let $X$ denote its vertex set.

Apply~\eqref{eq:psi-core} to the five complete four-sets $X\setminus\{x\}$, where $x\in X$, and sum the resulting inequalities. Summing Lemma~\ref{lem:link} over the same cores, with outer set $V(H)\setminus X$, produces the second estimate:
\begin{align}
 50+5|\mathcal E_2(X)|+4|\mathcal E_1(X)|
   &\ge5[\gamma(n-4)+5(n-5)+9+\delta],             \label{eq:K5-inc}\\
 4|\mathcal E_1(X)|&\le5\gamma(n-5).               \notag
\end{align}
Indeed, an edge inside $X$ or in $\mathcal E_2(X)$ is counted five times in the first sum, an edge in $\mathcal E_1(X)$ is counted four times in both sums and the ten internal edges contribute fifty to the first sum. Consequently,
\begin{equation}
 |\mathcal E_2(X)|\ge\gamma(n-4)-\gamma(n-5)+5(n-5)-1+\delta.
 \label{eq:E2-lower}
\end{equation}

We next show that some $u\in V(H)\setminus X$ satisfies $|L(u,X)|\ge9$. Suppose instead that every such link has at most eight edges. Equations~\eqref{eq:E2-lower} and~\eqref{eq:g-one} imply
\[
 8(n-5)-1\le|\mathcal E_2(X)|\le8(n-5).
\]
Thus either all external links have size eight, or exactly one has size seven and all others have size eight. In the former case, Lemma~\ref{lem:K5-pair} and~\eqref{eq:K5-inc} imply
\[
3\binom{n-5}{2}\ge|\mathcal E_1(X)|
\ge\frac{5\gamma(n-5)-5}{4}
>3\binom{n-5}{2},
\]
a contradiction. In the latter case,~\eqref{eq:E2-lower} and~\eqref{eq:g-one} force $\delta=0$ and $n-5$ to be even. Hence
\[
|\mathcal E_1(X)|
\le3\binom{n-6}{2}+5(n-6)
<\frac54\gamma(n-5)
\le|\mathcal E_1(X)|.
\]
The strict inequality holds because $n-5$ is even and at least four; the difference is $(n-7)(3n-23)/8>0$. Both cases are impossible. We may therefore choose $u$ with $|L(u,X)|\ge9$ and let $Y=X\cup\{u\}$; then $e(H[Y])\ge19$.

We now exclude both possibilities for $H[Y]$.
Write $m=n-6\ge2$. Suppose first that $H[Y]=K_6^{(3)}$. Sum $\psi(R)$ over the fifteen four-subsets $R$ of $Y$. Each internal edge is counted fifteen times, each edge in $\mathcal E_2(Y)$ fourteen times and each edge in $\mathcal E_1(Y)$ ten times. Lemma~\ref{lem:six-links}(i) implies $|\mathcal E_2(Y)|\le10m$. Summing Lemma~\ref{lem:link} over these cores, with outer vertices restricted to $V(H)\setminus Y$, also shows that $10|\mathcal E_1(Y)|\le15\gamma(m)$. Hence
\[
 \sum_{R\in\binom Y4}\psi(R)
 \le300+140m+15\gamma(m).
\]
On the other hand,~\eqref{eq:psi-core} and~\eqref{eq:g-two} imply
\[
 \sum_{R\in\binom Y4}\psi(R)
 \ge15[\gamma(m)+11m+18+\delta].
\]
These bounds force $25m\le30-15\delta$, contrary to $m\ge2$.

Thus $H[Y]=K_6^{(3)}\setminus\{A\}$. Write $Y=A\cup B$, where $|A|=|B|=3$ and consider the nine complete four-sets containing two vertices from each side. Assign weight nine to an edge $qxy$ with $q\notin Y$ and $x,y$ on the same side and weight eight when $x,y$ lie on different sides. Let $\omega$ denote the total weight. Sum~\eqref{eq:psi-core} over the nine cores and separately sum Lemma~\ref{lem:link} over their restrictions to $V(H)\setminus Y$. We obtain
\begin{align}
 9[\gamma(m)+11m+18+\delta]
   &\le171+\omega+6|\mathcal E_1(Y)|,               \label{eq:nine-inc}\\
 6|\mathcal E_1(Y)|&\le9\gamma(m).                \label{eq:nine-link}
\end{align}
Here the nineteen internal edges are counted nine times, a same-side pair is counted nine times, a mixed pair eight times and an edge in $\mathcal E_1(Y)$ six times. Combining the two inequalities, we obtain
\begin{equation}
 \omega\ge99m-9+9\delta.                       \label{eq:omega-lower}
\end{equation}

By Lemma~\ref{lem:six-links}(ii)(a), an external vertex contributes at most $99$ to $\omega$, with equality precisely for the equality-case link. Every other link has at most eleven edges and therefore contributes at most $6\cdot9+5\cdot8=94$. Let $r$ be the number of external vertices whose links are not of the equality type. Then $\omega\le99m-5r$, so~\eqref{eq:omega-lower} implies $5r\le9(1-\delta)$. Thus $r=0$ when $\delta=1$, and $r\le1$ when $\delta=0$. Moreover,~\eqref{eq:nine-inc} and this upper bound on $\omega$ imply
\begin{equation}
 |\mathcal E_1(Y)|\ge\frac32\gamma(m)-\frac32+\frac32\delta+\frac56r.
 \label{eq:E1-six-lower}
\end{equation}

If $r=0$, Lemma~\ref{lem:six-links}(ii)(b) implies $|\mathcal E_1(Y)|\le3\binom m2$, whereas~\eqref{eq:E1-six-lower} implies
\[
 |\mathcal E_1(Y)|\ge\frac32\gamma(m)-\frac32
 >3\binom m2,
\]
where the strict inequality holds for every $m\ge2$.

It remains to consider $r=1$, which forces $\delta=0$. Let $z$ be the exceptional vertex. The other $m-1$ vertices each contribute $99$ to $\omega$, so~\eqref{eq:omega-lower} forces the contribution of $z$ to be at least $90$. A link with at most ten edges contributes at most $6\cdot9+4\cdot8=86$. Hence $|L(z,Y)|=11$. By Lemma~\ref{lem:six-links}(ii)(c), every pair consisting of $z$ and an equality-case vertex has codegree at most five in $Y$. Lemma~\ref{lem:six-links}(ii)(b) bounds the codegree of every other external pair by three. Consequently,
\[
 |\mathcal E_1(Y)|\le3\binom{m-1}{2}+5(m-1).
\]
Since $\gamma(m)$ is even and $|\mathcal E_1(Y)|$ is an integer,~\eqref{eq:E1-six-lower} also implies $|\mathcal E_1(Y)|\ge\frac32\gamma(m)$. This is impossible, because
\[
 \frac32\gamma(m)-\left[3\binom{m-1}{2}+5(m-1)\right]
 =\frac34(m-1)(m-2)
  +\frac{3\lfloor m/2\rfloor-m+1}{2}>0.
\]
Here $m\ge2$ and $3\lfloor m/2\rfloor\ge m$. We conclude that $e(H)\le b(n)$, and that $H$ is $K_5^{(3)}$-free whenever equality holds.

It remains to characterize equality. Assume that $e(H)=b(n)$, so $H$ is $K_5^{(3)}$-free by the preceding argument. For every complete four-set $R$, each external vertex is joined to at most five pairs of $R$; otherwise it would form a $K_5^{(3)}$ with $R$. By Lemma~\ref{lem:link} and~\eqref{eq:delete-assumption},
\[
 b(n)-b(n-4)\le\psi(R)
 \le4+5(n-4)+\gamma(n-4)=b(n)-b(n-4).
\]
Equality holds throughout: $e(H-R)=b(n-4)$, $|\mathcal E_1(R)|=\gamma(n-4)$ and $|\mathcal E_2(R)|=5(n-4)$. Since each external link on $R$ has size at most five, every such link has size exactly five. Thus
\begin{equation}
\bigl|\{uv\in\tbinom R2:uvx\notin E(H)\}\bigr|=1
\quad\text{for every complete four-set $R$ and every $x\notin R$.} \label{eq:unique-missing-pair}
\end{equation}

By Observation~\ref{obs:four-set-incidence}, choose a complete four-set $S$ and write $U=V(H)\setminus S$. For $x\in U$, let $\mu(x)\in\binom S2$ be its unique missing pair. For distinct $x,y\in U$, let
\[
 D(x,y)=\{q\in S:qxy\notin E(H)\}.
\]
If $q\in\mu(x)$, then $(S\setminus\{q\})\cup\{x\}$ is a complete four-set. Among the pairs contained in $S\setminus\{q\}$, the link of $y$ is missing no pair if $q\in\mu(y)$, and exactly one otherwise. The other missing pairs are counted by $|D(x,y)\setminus\{q\}|$. Thus~\eqref{eq:unique-missing-pair} implies
\begin{equation}
 |D(x,y)\setminus\{q\}|=
 \begin{cases}
  1,&q\in\mu(y),\\
  0,&q\notin\mu(y).
 \end{cases} \label{eq:mu-identity}
\end{equation}
Suppose that $\mu(x)=\{u,v\}$ and $\mu(y)=\{u,w\}$, where $v\ne w$. Taking $q=v$ in~\eqref{eq:mu-identity} shows that $D(x,y)\subseteq\{v\}$. After interchanging $x$ and $y$, taking $q=w$ shows that $D(x,y)\subseteq\{w\}$. Thus $D(x,y)=\varnothing$. On the other hand, taking $q=u$ requires $|D(x,y)\setminus\{u\}|=1$, a contradiction. Hence two distinct values of $\mu$ are complementary. In this case, applying~\eqref{eq:mu-identity} to both vertices of $\mu(x)$ implies $D(x,y)=\varnothing$. If $\mu(x)=\mu(y)=P=\{u,v\}$, then $|D(x,y)\setminus\{u\}|=|D(x,y)\setminus\{v\}|=1$. Thus either $D(x,y)=P$, or $D(x,y)$ is a singleton in $S\setminus P$.

Fix $P$ in the image of $\mu$ and let $Q=S\setminus P$. By the preceding paragraph, every value of $\mu$ is either $P$ or $Q$. Moreover,
\[
 |\mathcal E_1(S)|=\sum_{xy\in\binom U2}(4-|D(x,y)|)=\gamma(n-4)>3\binom{n-4}{2}.
\]
The displayed inequality forces $D(x,y)=\varnothing$ for some pair $xy$. The alternatives in the preceding paragraph then rule out $\mu(x)=\mu(y)$. Hence $\mu(x)\ne\mu(y)$ and both types $P$ and $Q$ occur. If $p,q\in U$ have the same type and $z\in U$ has the opposite type, then $D(p,z)=D(q,z)=\varnothing$. If $|D(p,q)|\le1$, then $H$ contains a copy of $F^-_{4,3}$ with core $S$ and outer set $\{p,q,z\}$, a contradiction. Therefore $|D(p,q)|\ge2$ and the preceding alternatives imply
\begin{equation}
 D(x,y)=
 \begin{cases}
  P,&\mu(x)=\mu(y)=P,\\
  Q,&\mu(x)=\mu(y)=Q,\\
  \varnothing,&\mu(x)\ne\mu(y).
 \end{cases}                                                     \label{eq:D-structure}
\end{equation}
Let $U_P,U_Q$ be the two type classes.

We claim that each type class is independent. Suppose that $x,y,z\in U_P$ form an edge and choose $q\in Q$. By~\eqref{eq:D-structure}, the four-set $\{q,x,y,z\}$ is complete. However, for any $p\in P$, its link on this four-set is missing the three pairs $xy,xz,yz$, contrary to~\eqref{eq:unique-missing-pair}. Thus $U_P$ is independent, and the same argument applies to $U_Q$.

The definition of $\mu$ and~\eqref{eq:D-structure} now show that $P\cup U_P$ and $Q\cup U_Q$ are both independent. Thus $H$ is two-colorable. If these parts have sizes $a,b$, then
\[
 b(n)=e(H)\le\frac{ab(n-2)}2
 \le\frac{n-2}{2}\left\lfloor\frac{n^2}{4}\right\rfloor=b(n).
\]
Equality forces $|a-b|\le1$ and every triple meeting both parts to be present. Hence $H\cong B_n$.
\end{proof}

The induction based on Lemma~\ref{lem:engine} advances by four, so we first settle the orders $8,9,10,11$.

\begin{lemma}\label{lem:bases}
Let $H$ be an $F^-_{4,3}$-free $3$-graph on $n$ vertices, where $8\le n\le11$. Then $e(H)\le b(n)$, with equality if and only if $H\cong B_n$.
\end{lemma}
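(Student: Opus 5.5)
The plan is to apply Lemma~\ref{lem:engine} directly for $8\le n\le 11$. This requires verifying the deletion hypothesis~\eqref{eq:delete-assumption} at orders $n-4\in\{4,5,6,7\}$. That hypothesis fails in general, since $b(4)=3$, $b(5)=9$, $b(6)=18$, $b(7)=30$ are smaller than $\binom{n-4}{3}$. So the base cases need their own treatment. The idea is to rerun the counting of Lemma~\ref{lem:engine} with the true extremal numbers of the small orders in place of $b(n-4)$, and then recover the lost slack from local structure.

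\textbf{Step one.} Suppose $e(H)\ge b(n)$, with either strict inequality or $H$ containing a $K_5^{(3)}$. By Observation~\ref{obs:four-set-incidence} there is a complete four-set $R$, and Lemma~\ref{lem:link} gives $\psi(R)\le 4+6(n-4)+\gamma(n-4)$. Hence
\[
e(H-R)\ge b(n)-4-6(n-4)-\gamma(n-4)=b(n-4)-(n-4)
\]
by~\eqref{eq:b-inc}. This lower bound is compared with the known values $\ex(4)=4$, $\ex(5)=10$, $\ex(6)=20$, $\ex(7)=31$ from the Remark and Lemma~\ref{lem:seven}(i). The deficit is then of bounded size, and I will split into cases according to whether $H-R$ is close to complete.

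\textbf{Step two: choosing $R$ well.} Since $H$ has many edges, it contains a $K_5^{(3)}$ by the first part of the proof of Lemma~\ref{lem:engine}. That argument only used~\eqref{eq:psi-core}, so I will instead argue via $|\mathcal E_2(R_0)|$ large directly. The remaining steps of Lemma~\ref{lem:engine} then apply: the growth to a six-set $Y$ with $e(H[Y])\ge19$, and the exclusion of $K_6^{(3)}$ and of $K_6^{(3)}\setminus\{A\}$. These steps only need~\eqref{eq:psi-core} for the specific four-sets inside $X$ or $Y$. For those sets I will bound $e(H-R)$ via the small-order extremal numbers, but now with $H-R$ containing the remaining vertices of $X$ or $Y$ together with their structure. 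For $n=8$, the graph $H-R$ has four vertices and contributes at most $4$ versus $b(4)=3$, so only one unit of slack must be absorbed. I would try to absorb it via the strict inequalities already present: $25m\le 30-15\delta$ has room, and the $r$-analysis has a positive margin $\tfrac34(m-1)(m-2)+\dots$. The losses are $+1$ at $n=8$ and $n=9$, $+2$ at $n=10$, and $+1$ at $n=11$, where $\ex(7)=31$ versus $30$. Multiplied by the number of cores summed (five, fifteen or nine), these must fit within the margins.

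\textbf{Step three: the equality case.} For equality, once $H$ is shown to be $K_5^{(3)}$-free, every external link on a complete four-set has at most five edges. Then $e(H-R)\ge b(n-4)$, and a $K_5^{(3)}$-free $H-R$ on $4,5,6,7$ vertices has at most $b(n-4)$ edges. At order $7$ this is Lemma~\ref{lem:seven}(ii) together with the fact that $31$ edges force a $K_5^{(3)}$. At orders $5$ and $6$ it requires a short direct check that $K_5^{(3)}$-free graphs have at most $9$ and $18$ edges respectively, which follows from $\pi$-type averaging over five-sets. Hence~\eqref{eq:delete-assumption} holds for every complete four-set in the $K_5^{(3)}$-free setting, and the structural second half of Lemma~\ref{lem:engine} yields $H\cong B_n$ verbatim.

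\textbf{Main obstacle.} I expect the main obstacle to be the upper bound when $H$ contains a $K_5^{(3)}$, where the summed slack, for instance $15\times 2$ at $n=10$, exceeds the available margin. If that happens, I would fall back on case analysis of the small graph $H-R$ using Lemma~\ref{lem:seven}(i), whose equality structure $K_7^{(3)}\setminus K_4^{(3)}$ pins down the configuration at $n=11$, and use Lemmas~\ref{lem:K5-pair} and~\ref{lem:six-links} to bound the cross edges.
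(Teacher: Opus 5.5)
Your Step three is sound once $H$ is known to be $K_5^{(3)}$-free; at order seven only Lemma~\ref{lem:seven}(i) is needed. The gap is that the proposal never proves the upper bound or $K_5^{(3)}$-freeness, and the route you sketch does not give them.

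First, $b(4)=\binom43-2\binom23=4$, not $3$. So for $n=8$ the hypothesis~\eqref{eq:delete-assumption} is automatic and Lemma~\ref{lem:engine} applies as it stands.

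For $n=9,10,11$ you weaken~\eqref{eq:psi-core} by $\ell=\ex(n-4,F^-_{4,3})-b(n-4)\in\{1,2,1\}$ per core and hope the margins of Lemma~\ref{lem:engine} absorb $5\ell$, $15\ell$ or $9\ell$. They do not:
\begin{itemize}
\item At $n=10$ with $m=4$ and $\delta=1$, take the case $H[Y]=K_6^{(3)}\setminus\{A\}$. The nine-core sum only gives $\omega+6|\mathcal E_1(Y)|\ge558$, hence $\omega\ge378$. One equality-case vertex and three vertices with eleven-edge links ($\omega\le381$) survive this. The resulting requirement $|\mathcal E_1(Y)|\ge30$ is allowed by Lemma~\ref{lem:link} (at most $30$) and by the codegree bounds of Lemma~\ref{lem:six-links} (at most $3\cdot6+3\cdot5=33$). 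So the counting does not even yield $e(H)\le b(n)$.
\item For $\delta=0$ the slack breaks earlier, at the step that produces an external link with nine edges. At $n=9$, three eight-edge links and one seven-edge link on $X$ with $|\mathcal E_1(X)|=24$ satisfy every inequality there. At $n=11$, two seven-edge links with $|\mathcal E_1(X)|=60$ survive in the same way. Hence you cannot conclude that an extremal $H$ is $K_5^{(3)}$-free, which Step three presupposes.
\item Your step producing a $K_5^{(3)}$ when $\delta=1$ also needs a patch: with the loss you only get $|\mathcal E_2(R_0)|\ge5(n-4)+1-\ell$. It is rescued by noting that $e(H-R_0)>b(n-4)$ itself forces a $K_5^{(3)}$ at these orders, but you do not say so.
\end{itemize}
The ``fallback'' case analysis is not an argument.

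The missing idea is to avoid deleting four-sets inside $X$ or $Y$ at these small orders, which is where the loss comes from. For $n\in\{9,11\}$ the paper deletes the whole six-set or five-set, whose complement is small enough to bound directly. This gives $\psi(Y)\ge b(n)-\binom{n-6}{3}$, and, once six-sets with at least nineteen edges are excluded, $\psi(X)\ge b(n)-b(n-5)$. Together with Lemma~\ref{lem:link}, Lemma~\ref{lem:six-links} (including the codegree refinements (ii)(b),(c)) and Lemma~\ref{lem:K5-pair}, these bounds first exclude six-sets spanning at least nineteen edges and then exclude $K_5^{(3)}$. After that,~\eqref{eq:delete-assumption} holds, because $K_5^{(3)}$-free graphs on five and seven vertices have at most $b(5)=9$ and $b(7)=30$ edges (Lemma~\ref{lem:seven}(i)). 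Lemma~\ref{lem:engine} then finishes.

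For $n=10$, where your loss is largest, no local analysis is needed. Every edge lies in seven of the ten vertex-deleted subgraphs, so the $n=9$ case gives $7e(H)\le10\,b(9)=700=7\,b(10)$. At equality every $H-v\cong B_9$, so $H$ is $K_5^{(3)}$-free and Lemma~\ref{lem:engine} applies again.
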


\begin{proof}
For $n=8$, deleting any four vertices leaves at most $\binom43=4=b(4)$ edges. Lemma~\ref{lem:engine} therefore establishes the bound and its equality case.

We next treat $n=9$ and $n=11$ together. It suffices to consider $e(H)\ge b(n)$. We shall exclude six-sets spanning at least nineteen edges, then exclude complete five-sets, and finally apply Lemma~\ref{lem:engine}. Write $t=n-6\in\{3,5\}$. We first show that every six-set spans at most eighteen edges. Suppose otherwise, and choose a six-set $Y$ spanning at least nineteen edges. Since its complement has $t$ vertices,
\begin{equation}
 \psi(Y)\ge b(n)-\binom t3=
 \begin{cases}
 69,&t=3,\\
 125,&t=5.
 \end{cases}                                                    \label{eq:base-six-lower}
\end{equation}
The induced graph $H[Y]$ is either $K_6^{(3)}$ or $K_6^{(3)}$ with one edge removed. In the first case, sum Lemma~\ref{lem:link} over all fifteen four-subsets of $Y$, restricting the outer vertices to $V(H)\setminus Y$. Each edge in $\mathcal E_1(Y)$ occurs ten times. In the second case, write $H[Y]=K_6^{(3)}\setminus\{A\}$ and use the nine complete four-sets with two vertices in each of $A$ and $Y\setminus A$; each edge in $\mathcal E_1(Y)$ occurs six times. Thus in either case,
\[
 |\mathcal E_1(Y)|\le\frac32\gamma(t)=
 \begin{cases}
 15,&t=3,\\
 48,&t=5.
 \end{cases}
\]
If $H[Y]$ is complete, Lemma~\ref{lem:six-links}(i) implies
\[
 \psi(Y)\le20+10t+\frac32\gamma(t)=
 \begin{cases}
 65,&t=3,\\
 118,&t=5,
 \end{cases}
\]
contrary to~\eqref{eq:base-six-lower}.

We may therefore assume that $H[Y]=K_6^{(3)}\setminus\{A\}$. Let $a$ be the number of vertices outside $Y$ whose links on $Y$ are the equality-case link from Lemma~\ref{lem:six-links}(ii)(a). The remaining $t-a$ links have at most eleven edges. Moreover, part (b) bounds the number of extensions of each pair among these $a$ vertices by three, while every other pair has at most six extensions in $Y$. Together with the preceding link bound, these observations imply $|\mathcal E_2(Y)|\le11t+a$ and $|\mathcal E_1(Y)|\le
 \min\left\{\frac32\gamma(t),6\binom t2-3\binom a2\right\}$.
Consequently,
\[
 \psi(Y)\le19+11t+a+
 \min\left\{\frac32\gamma(t),6\binom t2-3\binom a2\right\}.
\]
For $t=3$, the first term of the minimum bounds this expression by $67+a\le69$ when $a\le2$, while the second bounds it by $64$ when $a=3$. For $t=5$, the corresponding bounds are $122+a\le125$ when $a\le3$ and at most $120$ when $a\ge4$. Comparison with~\eqref{eq:base-six-lower} leaves only $(t,a)\in\{(3,2),(5,3)\}$. In either case, $|\mathcal E_2(Y)|=11t+a$ and $|\mathcal E_1(Y)|=\frac32\gamma(t)$. In particular, every other external link has size eleven.

A pair consisting of an equality-case vertex and a vertex with an eleven-edge link cannot have six extensions in $Y$: Lemma~\ref{lem:six-links}(ii)(c) would force the latter link to have at most ten edges. Thus each such pair has at most five extensions. Pairs of equality-case vertices have at most three extensions by part (b), while all remaining pairs have at most six. Therefore
\[
 |\mathcal E_1(Y)|\le
 3\binom a2+5a(t-a)+6\binom{t-a}{2}=
 \begin{cases}
 13,&(t,a)=(3,2),\\
 45,&(t,a)=(5,3).
 \end{cases}
\]
Both values are smaller than $\frac32\gamma(t)$, a contradiction. This proves the six-set estimate.

We now show that $H$ is $K_5^{(3)}$-free. Suppose that a five-set $X$ is complete, and write $s=n-5\in\{4,6\}$. The six-set estimate shows that every external link on $X$ has at most eight edges. It also implies $e(H-X)\le18=b(6)$ when $s=6$; for $s=4$, the bound $e(H-X)\le4=b(4)$ is immediate. Hence
\[
 \psi(X)\ge b(n)-b(s)=
 \begin{cases}
 66,&s=4,\\
 117,&s=6.
 \end{cases}
\]
Let $a$ be the number of external links on $X$ with eight edges. Then $|\mathcal E_2(X)|\le7s+a$. Summing Lemma~\ref{lem:link} over the five complete four-subsets of $X$, with outer vertices restricted to $V(H)\setminus X$, counts each edge in $\mathcal E_1(X)$ four times. This bounds $|\mathcal E_1(X)|$ by $\frac54\gamma(s)$. Alternatively, Lemma~\ref{lem:K5-pair} bounds the extensions into $X$ of each pair among the $a$ vertices by three; every other pair has at most five extensions. Therefore
\[
 |\mathcal E_1(X)|\le
 \min\left\{\frac54\gamma(s),5\binom s2-2\binom a2\right\}.
\]
Thus
\[
 \psi(X)\le10+7s+a+
 \min\left\{\frac54\gamma(s),5\binom s2-2\binom a2\right\}.
\]
For $s=4$, the right-hand side is at most $65$: use the first term of the minimum when $a\le2$ and the second when $a\ge3$. For $s=6$, it is at most $116$: use the first term when $a\le4$ and the second when $a\ge5$. Both bounds contradict the preceding lower bound on $\psi(X)$. Therefore $H$ is $K_5^{(3)}$-free.

For $n=9$, this implies $e(H-R)\le9=b(5)$ for every complete four-set $R$. For $n=11$, Lemma~\ref{lem:seven}(i) implies $e(H-R)\le30=b(7)$: its exceptional thirty-one-edge graph contains a $K_5^{(3)}$, formed by the three vertices outside the missing four-set and any two vertices inside it. Thus the deletion hypothesis of Lemma~\ref{lem:engine} holds in both cases. The lemma establishes $e(H)\le b(n)$ and $H\cong B_n$ at equality.

Finally, suppose that $n=10$. By the $n=9$ case, every vertex deletion has at most $70$ edges. Each edge occurs in exactly seven of these deletions, so
\[
 7e(H)=\sum_{v\in V(H)}e(H-v)\le10\cdot70.
\]
Hence $e(H)\le100=b(10)$. If equality holds, every $H-v$ is isomorphic to $B_9$, so $H$ is $K_5^{(3)}$-free. A six-vertex $3$-graph with at least nineteen edges contains a $K_5^{(3)}$: if one triple is missing, delete a vertex of that triple. Consequently, $e(H-R)\le18=b(6)$ for every complete four-set $R$. Lemma~\ref{lem:engine} now implies $H\cong B_{10}$.
\end{proof}

\begin{proof}[Proof of Theorem~\ref{thm:main}]
The construction $B_n$ supplies the lower bound. $B_n$ is $F^-_{4,3}$-free and $\ex(n,F^-_{4,3})\ge b(n)$. Lemma~\ref{lem:bases} establishes the matching upper bound and uniqueness for $8\le n\le11$. Proceed by induction in steps of four. Let $n\ge12$, assume that the theorem holds at order $n-4$ and let $H$ be an $F^-_{4,3}$-free $3$-graph on $n$ vertices. For every complete four-set $R$, the induced graph $H-R$ is $F^-_{4,3}$-free, so the induction hypothesis implies $e(H-R)\le b(n-4)$. Lemma~\ref{lem:engine} establishes the bound and its equality case, completing the induction.
\end{proof}

\section*{Declaration of AI use}

The authors used AI tools to assist with developing ideas, checking the proofs, and improving the language. All AI-assisted calculations and suggestions were independently verified and revised by the authors. The authors take full responsibility for the content of the paper.


{\small
\begin{thebibliography}{99}

\bibitem{BellmannReiher} L. Bellmann and C. Reiher, \emph{Tur\'an's theorem for the Fano plane}, Combinatorica \textbf{39} (2019), no.~5, 961--982.

\bibitem{BondyTuza} J. A. Bondy and Zs. Tuza, \emph{A weighted generalization of Tur\'an's theorem}, J. Graph Theory \textbf{25} (1997), no.~4, 267--275.

\bibitem{deCaenFuredi} D. de Caen and Z. F\"uredi, \emph{The maximum size of $3$-uniform hypergraphs not containing a Fano plane}, J. Combin. Theory Ser. B \textbf{78} (2000), no.~2, 274--276.

\bibitem{FranklHuangRodl} P. Frankl, H. Huang and V. R\"odl, \emph{On local Tur\'an problems}, J. Combin. Theory Ser. A \textbf{177} (2021), Paper No.~105329, 9~pp.

\bibitem{FurediKundgen} Z. F\"uredi and A. K\"undgen, \emph{Tur\'an problems for integer-weighted graphs}, J. Graph Theory \textbf{40} (2002), no.~4, 195--225.

\bibitem{FurediSimonovits} Z. F\"uredi and M. Simonovits, \emph{Triple systems not containing a Fano configuration}, Combin. Probab. Comput. \textbf{14} (2005), no.~4, 467--484.

\bibitem{GoldwasserHansen} J. Goldwasser and R. Hansen, \emph{The exact Tur\'an number of $F(3,3)$ and all extremal configurations}, SIAM J. Discrete Math. \textbf{27} (2013), no.~2, 910--917.

\bibitem{KeevashSurvey} P. Keevash, \emph{Hypergraph Tur\'an problems}, in: Surveys in Combinatorics 2011, London Math. Soc. Lecture Note Ser. \textbf{392}, Cambridge Univ. Press, Cambridge, 2011, 83--140.

\bibitem{KeevashMubayi} P. Keevash and D. Mubayi, \emph{The Tur\'an number of $F_{3,3}$}, Combin. Probab. Comput. \textbf{21} (2012), no.~3, 451--456.

\bibitem{KeevashSudakov} P. Keevash and B. Sudakov, \emph{The Tur\'an number of the Fano plane}, Combinatorica \textbf{25} (2005), no.~5, 561--574.

\bibitem{MubayiRodl} D. Mubayi and V. R\"odl, \emph{On the Tur\'an number of triple systems}, J. Combin. Theory Ser. A \textbf{100} (2002), no.~1, 136--152.

\bibitem{Turan} P. Tur\'an, \emph{On an extremal problem in graph theory} (in Hungarian), Mat. Fiz. Lapok \textbf{48} (1941), 436--452.

\end{thebibliography}
}
\end{document}